\let\oldsection=\section
\renewcommand{\@seccntformat}[1]{\bf\@nameuse{the#1}.\quad}
\renewcommand\section{\@startsection{section}{1}%
            \z@{.7\linespacing\@plus\linespacing}{.5\linespacing}%
            {\normalfont\bfseries \boldmath}}
\renewcommand\subsection{\@startsection{subsection}{2}%
            \z@{.5\linespacing\@plus.7\linespacing}{-.5em}%
            {\normalfont\bfseries \boldmath}}
\renewcommand\subsubsection{\@startsection{subsubsection}{3}%
            \z@{.3\linespacing\@plus.5\linespacing}{-.5em}%
            {\normalfont\bfseries \boldmath}}
\theoremstyle{plain}
\newtheorem*{thm}{Theorem}
\newtheorem*{prop}{Proposition}
\theoremstyle{definition}
\numberwithin{equation}{subsection}
\newcounter{listequation}
\def\note#1{{\small\tt <<#1>>}}  
\def\note#1{}             
\def\Z{\mathbb Z}
\def\:{\colon}
\def\la{\lambda}
\def\phi{\varphi}
\def\:{\colon}
\def\JWw0S{{}^J W^{-w_0 S}}
\def\w0JWS{{}^{-w_0 J} W^S}
\def\fg{{\mathfrak g}}
\def\fb{{\mathfrak b}}
\def\fn{{\mathfrak n}}
\def\Hom{\operatorname{Hom}}
\def\Ext{\operatorname{Ext}}
\def\H{\operatorname{H}}
\def\Der{\operatorname{Der}}
\def\Inn{\operatorname{Inn}}
\def\TJJ'{T_J^{J'}}
\def\TJ'J{T_{J'}^J}
\def\Zmu{Z^{+}(\mu)}
\def\Label#1{\label{#1}{\tt [#1]}}
\def\Label{\label}
\begin{document}
\title[$\operatorname{EXT}^{1}$-quivers for the Witt Algebra $W(1,1)$]
{$\operatorname{EXT}^{1}$-quivers for the Witt Algebra $W(1,1)$}

\author{Brian D. Boe}
\address{Department of Mathematics \\
                      University of Georgia\\ Athens, Georgia 30602  }
\email{brian@math.uga.edu, nakano@math.uga.edu}

\author{Daniel K. Nakano}
\thanks{Research of the second author partially supported by NSF grant
DMS-0654169}

\author{Emilie Wiesner}
\address{Department of Mathematics \\
            Ithaca College\\ Ithaca, New York 14850  }
\email{ewiesner@ithaca.edu}

\date{\today}

\subjclass{}

\keywords{}

\dedicatory{}

\begin{abstract} Let ${\mathfrak g}$ be the finite dimensional Witt  
algebra $W(1,1)$
over an algebraically closed field of characteristic $p>3$. It is  
well known that
all simple $W(1,1)$-modules are finite dimensional. Each simple  
module admits
a character $\chi\in {\mathfrak g}^{*}$. Given ${\chi}\in {\mathfrak  
g}^{*}$ one
can form the (finite dimensional) reduced enveloping algebra $u 
({\mathfrak g},\chi)$.
The simple modules for $u({\mathfrak g},\chi)$ are precisely those  
simple $W(1,1)$-modules
admitting the character $\chi$. In this paper the authors compute $ 
\Ext^{1}$ between pairs of
simple modules for $u(\mathfrak g,\chi)$.
\end{abstract}

\maketitle

\parskip=2pt

\section{Introduction}
\Label{S:Intro}

\subsection{} Let ${\mathfrak g}$ be a restricted Lie algebra over
an algebraically closed field $k$ of characteristic $p>0$, and
$u({\mathfrak g})$ its restricted enveloping algebra.
Block and Wilson \cite{BW} have shown that all restricted
simple Lie algebras over $k$ are either classical or of Cartan type
provided that the characteristic of $k$ is larger than 7.
Each classical Lie algebra can be realized as the Lie algebra of a
reductive connected algebraic group $G$ with associated root system $ 
\Phi$.
When $p$ is greater than or equal to the Coxeter number $h$ of $\Phi$
there is a famous conjecture of Lusztig \cite[II 7.20]{Jan},  
involving Kazhdan-Lusztig polynomials
which predicts the dimensions of the simple modules for $u({\mathfrak  
g})$.
In recent times, a general scheme-theoretic setting
has been used to relate the representation theory of the algebraic group
$G$ with the representation theories of the infinitesimal Frobenius  
kernels $G_{r}$.
In the case when $r=1$, modules for $G_{1}$ are equivalent to modules  
for $u({\mathfrak g})$.
Cline, Parshall and Scott \cite{CPS2,CPS3} have shown that the  
Lusztig Conjecture can be equivalently
formulated in a graded category (i.e., $G_{1}T$-modules) using  
vanishing criteria for
$\text{Ext}^{1}$-groups between simple modules.

Lie algebras of Cartan type fall into four infinite classes of  
algebras: $W$, $S$, $H$ and $K$.
In most of the cases these Lie algebras are ${\mathbb Z}$-graded with 
${\mathfrak g}=\oplus_{i=t}^{s} {\mathfrak g}_{i}$ ($t=-1$ for
algebras of types $W$, $S$, $H$ and $t=-2$ for algebras of type $K$)  
where the Lie subalgebra
${\mathfrak g}_{0}$ is a classical Lie algebra. The simple modules
for $u({\mathfrak g})$ have been classified but their precise
description depends on knowing the simple modules for $u({\mathfrak g} 
_{0})$
\cite{Sh1,Sh2,Sh3,N1,Ho1,Ho2,Ho3,Hu1,Hu2} (i.e., for $p$ large, this
reduces to knowing the Lusztig conjecture). Moreover, composition  
factors
of the projective indecomposable modules of $u({\mathfrak g})$ depend on
understanding the composition factors of the projective modules for
$u({\mathfrak g}_{0})$ (cf.\ \cite[Thm. 3.1.5]{N1}, \cite[Thm. 4.1] 
{HoN}). Holmes and Nakano
\cite{N1,HoN} also proved that the restricted enveloping algebra
$u({\mathfrak g})$ for a Lie algebra of Cartan type has one block.

\subsection{} From the above discussion, it is natural to ask about  
extensions
between simple modules for $u({\mathfrak g})$ where ${\mathfrak g}$ is
a Lie algebra of Cartan type. Since $u({\mathfrak g})$ has one block,
one would expect that the computation of extensions would be at least
as difficult as in the classical case. Lin and Nakano \cite{LN}  
proved that
for the Lie algebras of type $W$ and $S$ there are no self-extensions
between simple modules. This result parallels Andersen's \cite{A}  
earlier work
where he showed that self-extensions do not exist for simple $G_{1}$- 
modules unless
$p=2$ and $\Phi$ has a component of type $C_{n}$.

The main results in this paper aim to initiate the study of  
extensions between
simple modules for Cartan type Lie algebras, not just for restricted  
simple modules
but also for the non-restricted ones. Fix ${\mathfrak g}=W(1,1)$.  
Given $\chi\in {\mathfrak g}^{*}$,
one can form the reduced enveloping algebra $u({\mathfrak g},\chi)$.  
The simple
modules for these algebras were determined by Chang \cite{Ch}. A  
complete description of
the simple modules with their projective covers is also given in \cite 
[\S\S 1,2]{FN}. The
goal of this paper is to calculate $\text{Ext}^{1}_{u({\mathfrak g}, 
\chi)}(S,T)$
where $S,T$ are simple $u({\mathfrak g},\chi)$-modules. In Section 2,  
we begin investigating
the restricted case (i.e., when $\chi=0$). A complete description of $ 
\text{Ext}^{1}$ groups between
the $p$-dimensional induced modules $Z^{+}(\lambda)$ is given. With  
this information, we
compute the $\text{Ext}^{1}$-quiver for $u(W(1,1))$ in Section 3.  
Section 4 is devoted to
handling the non-restricted case. The height $r(\chi)$ 
of the character  is a useful invariant
to delineate the representation theory of $W(1,1)$. The restricted  
case coincides with the height
$r(\chi)=-1$. The extension information for the $r(\chi)=0$ and $r 
(\chi)=1$ case uses ideas from the
computations in the restricted situation. It is interesting to note  
that when $1<r(\chi)<p-1$
there is only one simple $u({\mathfrak g},\chi)$-module. The  
dimensions of the simple module
and its projective cover are known. However, we are only able to  
provide partial results on $\text{Ext}^{1}$ in this case. It appears that this calculation when $1<r(\chi) 
<p-1$ needs
a more detailed understanding of the representation theory which is  
not yet available.
For $r(\chi)=p-1$, we can use results from \cite[Theorem 2.6]{FN} to  
handle this case completely.

\subsection{Notation and Preliminaries} \Label{S:Preliminaries}
Throughout this paper let $k$ be an algebraically closed field of  
characteristic $p\geq 5$.
The $p$-dimensional Witt algebra ${\mathfrak g}:=W(1,1)$ is the $ 
{\mathbb Z}$-graded
restricted simple Lie algebra with basis $\{e_i \mid -1\leq i\leq p-2 
\}$;
with the bracket operation
$$
[e_i,e_j]=(j-i)e_{i+j}\qquad\qquad -1\leq i,j\leq p-2,
$$
where $e_{i+j}:=0$ if $i+j\not\in\{-1,...,p-2\}$;  and with the $p$- 
mapping given by
$e_i^{[p]}:=\delta_{i0} e_i$ for any $-1\leq i\leq p-2$.
We remark that one could also define this Lie algebra for $p=3$, but
then it would be isomorphic to $\mathfrak{sl}(2)$.

Let $\chi\in {\mathfrak g}^{*}$. The {\em centralizer\/} of $\chi$ in  
${\mathfrak g}$ is
defined as
$$
{\mathfrak g}^{\chi}=\{\, x\in {\mathfrak g} \mid \ \chi([x, 
{\mathfrak g}])=0\,\}.
$$
Consider the $p$ subalgebras
$$
{\mathfrak g}_i:=ke_i\oplus\cdots\oplus ke_{p-2}\qquad\qquad -1\leq i
\leq p-2.
$$
We set ${\mathfrak b}^{+}={\mathfrak g}_{0}$,
${\mathfrak t}=ke_{0}$, and ${\mathfrak n}^{+}={\mathfrak g}_{1}$.
Define the
{\em height\/} $r(\chi)$ of $\chi$ by
$$
r(\chi):=\min\{\, i \mid -1\leq i\leq p-2 \text{ and } \chi|_ 
{{\mathfrak g}_i}=0\,\}
$$
if $\chi(e_{p-2})=0$ and $r(\chi):=p-1$ if $\chi(e_{p-2})\neq 0$.
The height of $\chi$ turns out to be good invariant for studying the
simple modules for ${\mathfrak g}$.

If $\chi\in {\mathfrak g}^{*}$, the {\em reduced enveloping algebra}
$u({\mathfrak g},\chi)$ is a finite dimensional self-injective algebra.
The algebra $u({\mathfrak g},\chi)$ is defined as the quotient of the  
universal
enveloping algebra $U({\mathfrak g})$ by the ideal generated by  
(central)
elements of the form $x^{p}-x^{[p]}-\chi(x)^{p}$ for $x\in {\mathfrak  
g}$.
Any simple module for ${\mathfrak g}$ is finite dimensional and  
admits a character
$\chi\in {\mathfrak g}^{*}$ such that $x^{p}-x^{[p]}$ acts as $\chi(x) 
^{p}$. In
this way every simple module for ${\mathfrak g}$ is a simple module  
for $u({\mathfrak g},\chi)$
for some $\chi$. If $\chi=0$ then modules for $u({\mathfrak g}):=u 
({\mathfrak g},0)$ correspond to
restricted representations for ${\mathfrak g}$.

\section{Extensions Between Verma Modules}

\subsection{} For the remainder of Section 2 and Section 3, we will  
assume that $\chi=0$
or equivalently $r(\chi)=-1$.

The simple modules for $u({\mathfrak t})$ are parametrized by  
elements in $X:={\mathbb F}_{p}$.
If $\lambda\in X$, we can let ${\mathfrak n}^{+}$ act trivially and  
obtain a one-dimensional
$u({\mathfrak b}^{+})$-module. Let $Z^{+}(\lambda)=u({\mathfrak g}) 
\otimes_{u({\mathfrak b}^{+})}\lambda$, the Verma module of highest  
weight $\lambda$.
Then $Z^+(\la)$ is $p$-dimensional and has a basis $\{m_0, \ldots, m_ 
{p-1}\}$ where the action
of ${\mathfrak g}$ is given by
\begin{equation}\Label{moduleaction}
e_k. m_j = \left( j+k+1 + (k+1) \la \right) m_{j+k}
\end{equation}
(cf.\ \cite[Lemma 2.2.1]{N1}). Note that $m_{p-1}$ is a highest  
weight vector of weight $\lambda$ for $\mathfrak t$.

In order to compute cohomology it will be important to compute spaces  
of derivations. For
our first computation we define
\begin{eqnarray*}
\Der (\fn^+, Z^+(\la)) &=& \{ d: \fn^+ \rightarrow Z^+(\la)) \mid d 
([x,y])=x.d(y)-y.d(x) \}, \\
\Inn (\fn^+, Z^+(\la)) &=& \{ d: \fn^+ \rightarrow Z^+(\la)) \mid  
\exists m \in Z^+(\la) \ \mbox{such that} \ d(x)=x.m \},
\end{eqnarray*}
the set of $Z^{+}(\lambda)$-valued derivations (respectively, inner  
derivations) of $\fn^{+}$. Note that $\Inn (\fn^+, Z^+(\la))  
\subseteq \Der (\fn^+, Z^+(\la))$.  The space $ \Der (\fn^+, Z^+(\la)) 
$ is a $\fg$-module with the $\fg$-action given by $(x.d)(m)=x.(d(m))- 
d(x.m)$ for $x \in \fg$ and $m \in Z^+(\la)$.

\subsection{}\Label{SS:Vermaextension} The following theorem gives a  
description of
$\text{Ext}^{1}_{u({\mathfrak g})}(Z^+(\mu),Z^{+}(\la))$ for all $p 
\geq 5$.

\begin{thm} \Label{Thm:ZExt} Let ${\mathfrak g}=W(1,1)$.
\begin{itemize}
\item[(a)] For $p=5$,
$$
\Ext^1_{u(\fg)}(Z^+(\mu), Z^+(\la)) \cong \left\{ \begin{array}{ll}
k & \mbox{if} \ \la - \mu=2,3 \\
& \mbox{or} \ \{\mu, \la\} =\{0 \}, \{ 4 \} , \{ 0,1 \}, \{ 0,4 \},  
\{3, 4\}
,  \\
0 & \mbox{otherwise}.
\end{array} \right.
$$
\item[(b)] For $p \geq 7$,
$$\Ext^1_{u(\fg)}(Z^+(\mu), Z^+(\la))\cong k$$
if
\begin{itemize}
\item[(i)] $\la - \mu=2,3,4$;
\item[(ii)] $(\mu, \la)=(0,0), (-1,-1), (-1,0), (0,-1), (0,1),  
(-2,-1), (-5,0), (-1,4)$;
\item[(iii)] $\lambda$ is such that $2 \la^2-10 \la +3=0$ and $\la- 
\mu=6$.
\end{itemize}
Otherwise, $\Ext^1_{u(\fg)}(Z^+(\mu), Z^+(\la))= 0$.
\end{itemize}
\end{thm}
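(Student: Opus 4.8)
The plan is to compute $\Ext^1_{u(\fg)}(Z^+(\mu),Z^+(\la))$ via the Lie algebra cohomology of the nilpotent part $\fn^+=\fg_1$, exploiting the triangular-type decomposition $\fg = \fn^- \oplus \ft \oplus \fn^+$ where $\fn^- = ke_{-1}$. The strategy rests on a standard reciprocity/adjunction: since $Z^+(\la)$ is induced from $u(\fb^+)$, extensions between Verma modules should be governed by maps out of $\fn^+$, and the derivation spaces $\Der(\fn^+,Z^+(\la))$ and $\Inn(\fn^+,Z^+(\la))$ introduced just above are exactly the tools for this. First I would establish that $\Ext^1_{u(\fg)}(Z^+(\mu),Z^+(\la))$ can be identified with an appropriate weight space (the $\mu$-weight space for $\ft$) of $\H^1(\fn^+, Z^+(\la)) = \Der(\fn^+,Z^+(\la))/\Inn(\fn^+,Z^+(\la))$, using the $\fg$-module structure on $\Der(\fn^+,Z^+(\la))$ that the excerpt has set up.

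Next I would compute $\H^1(\fn^+,Z^+(\la))$ explicitly as a $\ft$-graded space. Because $\fn^+$ is generated as a Lie algebra by $e_1$ and $e_2$ (all higher $e_k$ arise from brackets $[e_1,e_k]=(k-1)e_{k+1}$ up to scalars, with the exception that one must check the top of the grading), a derivation $d$ is nearly determined by $d(e_1)$ and $d(e_2)$ subject to compatibility conditions coming from the relations $[e_i,e_j]=(j-i)e_{i+j}$. Using the explicit module action \eqref{moduleaction}, $e_k.m_j = (j+k+1+(k+1)\la)m_{j+k}$, I would write each $d(e_k)=\sum_j c_{k,j}m_j$ and translate the cocycle condition $d([e_i,e_j])=e_i.d(e_j)-e_j.d(e_i)$ into a linear system in the coefficients $c_{k,j}$. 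The weight of $m_j$ under $\ft=ke_0$ is read off from $e_0.m_j=(j+1+\la)m_j$, so the whole computation is graded by $\ft$-weight and decouples into small linear systems, one per weight. The Verma module $Z^+(\la)$ has its weights and the vanishing pattern of the coefficients $j+k+1+(k+1)\la$ controlling where the module action degenerates; the exceptional highest-weight behavior (noting $m_{p-1}$ is a highest weight vector) and the wrap-around at the top and bottom of the $\{m_0,\dots,m_{p-1}\}$ basis are what produce the sporadic extra extensions.

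After computing $\H^1(\fn^+,Z^+(\la))$ I would quotient by $\Inn(\fn^+,Z^+(\la))$ and extract the $\mu$-weight space, then read off the dimension. The generic contributions --- those giving $\la-\mu=2,3,4$ in part (b) --- I expect to come from the ``regular'' part of the weight system where the coefficients never vanish and the cohomology is governed by the structure of $\fn^+$ acting on a free-like module, matching the shifts $e_k$ produces $m_{j+k}$. The sporadic cases in (b)(ii), (b)(iii), and all of (a) should emerge precisely where one of the structure coefficients $j+k+1+(k+1)\la$ vanishes modulo $p$, creating extra cocycles or killing expected coboundaries; the quadratic condition $2\la^2-10\la+3=0$ in (b)(iii) is the signature of exactly such a coincidence among these linear factors, and the $p=5$ case in (a) is separated out because several of these vanishing conditions collide when $p$ is small.

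The main obstacle will be the careful bookkeeping of these degeneracies: determining, for each residue of $\la$ modulo $p$ and each $\ft$-weight, whether the relevant coefficients $j+k+1+(k+1)\la$ vanish, and hence whether the local linear system gains a cocycle or loses a coboundary. The finiteness of the base field forces one to track congruences modulo $p$ rather than working over $\Q$, so I would expect to handle the generic case uniformly for $p\geq 7$ and then enumerate the finitely many special values of $\la$ (and the collisions peculiar to $p=5$) by direct case analysis. Verifying that the quadratic $2\la^2-10\la+3=0$ has solutions contributing nontrivially, and that no additional sporadic solutions are overlooked, is the delicate endpoint of the argument.
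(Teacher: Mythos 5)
Your plan for the computational core (Step 1 in the paper's terms) matches the paper's own argument: derivations of $\fn^+$ into $Z^+(\la)$ are pinned down by $d(e_1),d(e_2)$ since $e_1,e_2$ generate $\fn^+$, the cocycle conditions decouple into weight-by-weight linear systems via the module formula \eqref{moduleaction}, and the sporadic extensions arise exactly from degeneracies of the coefficients $j+k+1+(k+1)\la$ modulo $p$. That part of your outline is sound and is what occupies Section \ref{SS:VermaDercomp}.

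However, there is a genuine gap in your first step, and it is precisely the point the paper flags as the delicate one. The ``standard reciprocity/adjunction'' you invoke (Frobenius reciprocity plus the Lyndon--Hochschild--Serre spectral sequence over $u(\ft)$) identifies $\Ext^1_{u(\fg)}(Z^+(\mu),Z^+(\la))$ with the \emph{restricted} cohomology $\H^1(u(\fn^+),Z^+(\la))_{\mu}$, not with the ordinary Lie algebra cohomology $\H^1(\fn^+,Z^+(\la))_{\mu}=\bigl(\Der(\fn^+,Z^+(\la))/\Inn(\fn^+,Z^+(\la))\bigr)_{\mu}$ that your derivation computation produces. These two groups differ in general: e.g.\ for the one-dimensional restricted Lie algebra $\fn=kx$ with $x^{[p]}=0$ acting on the free module $M=u(\fn)$, one has $\Ext^1_{u(\fn)}(k,M)=0$ because $M$ is injective, while $\H^1(\fn,M)\cong M/x.M\cong k$; so using ordinary cohomology can over-count extensions. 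What is true (Jantzen, I.9.19) is that there is an injection $\H^1(u(\fn^+),Z^+(\la))_{\mu}\hookrightarrow \H^1(\fn^+,Z^+(\la))_{\mu}$ whose image consists of the classes $\phi$ with $\phi(x^{[p]})=x^{p-1}.\phi(x)$ for all $x\in\fn^+$; since $x^{[p]}=0$ on $\fn^+$ this reduces to $e_1^{p-1}.\phi(e_1)=0$. The paper (Proposition \ref{P:Derivation}) checks this condition by direct computation: it can fail only when $\la=\frac{p-1}{2}$, and in that exceptional case the relevant weight space $\H^1(\fn^+,Z^+(\la))_{\la}$ is already zero by the Step 1 computations, so the injection is an isomorphism after all. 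Your proposal never addresses this restricted-versus-ordinary comparison, so as written the identification you start from is unjustified (and, absent the $\la=\frac{p-1}{2}$ check, not even obviously true); you would need to add this argument before your weight-space computation yields the stated $\Ext^1$ groups.
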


The $\text{Ext}^{1}$-quivers between Verma modules
for $p=5,7$ are shown in Figure \ref{Fig:Ext1verma}.

\psfrag{0}{{\footnotesize$0$}}
\psfrag{1}{{\footnotesize$1$}}
\psfrag{2}{{\footnotesize$2$}}
\psfrag{3}{{\footnotesize$3$}}
\psfrag{4}{{\footnotesize$4$}}
\psfrag{5}{{\footnotesize$5$}}
\psfrag{6}{{\footnotesize$6$}}
\psfrag{7}{{\footnotesize$7$}}
\psfrag{8}{{\footnotesize$8$}}
\psfrag{9}{{\footnotesize$9$}}
\psfrag{10}{{\footnotesize$10$}}
\psfrag{p=5}{{$p=5$: ``Maresias''}}
\psfrag{p=7}{{$p=7$}}
\begin{figure}[h]
\centering
\includegraphics[height=.25\textheight]{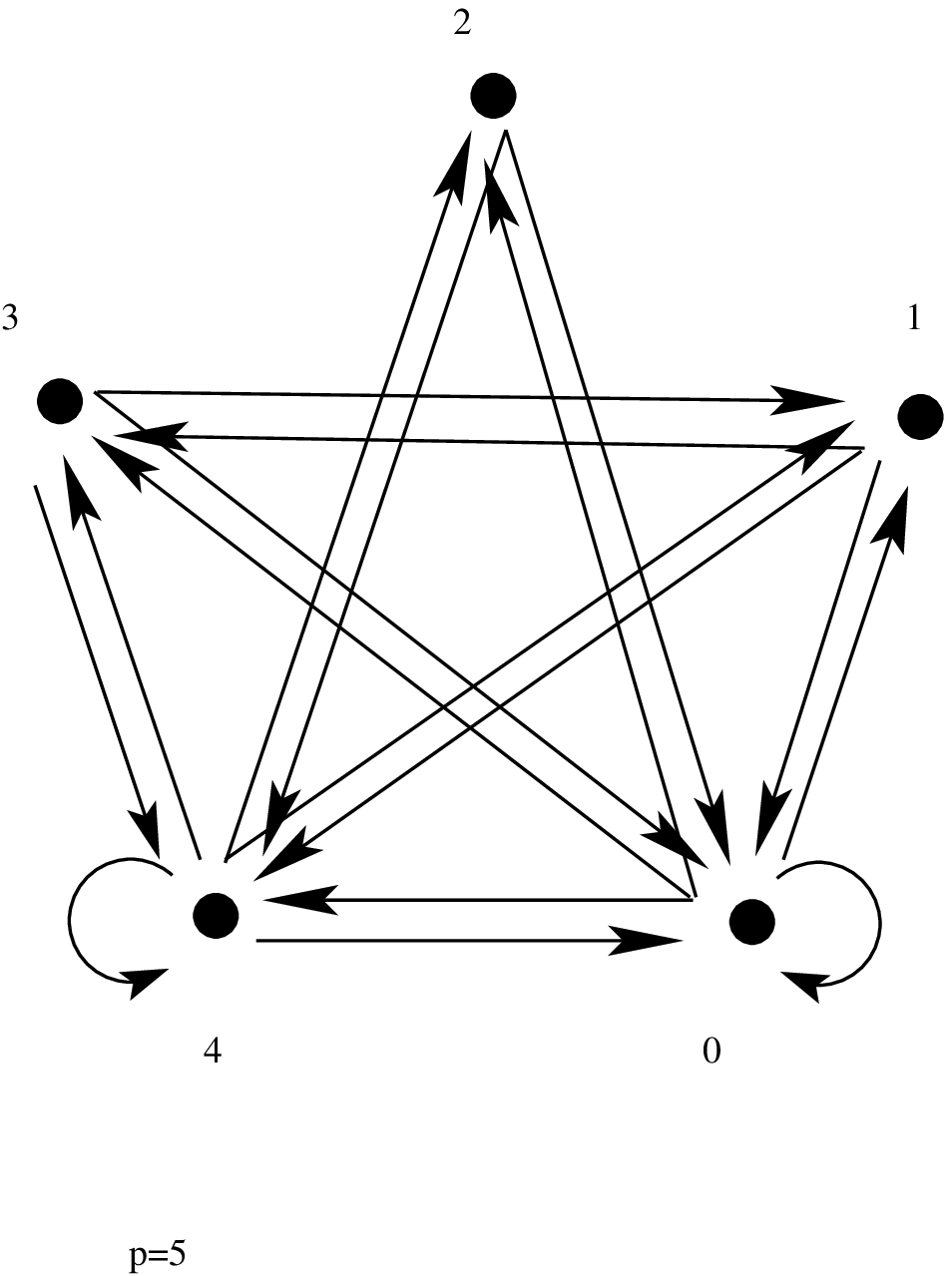}
\hspace{.5in}
\includegraphics[height=.3\textheight]{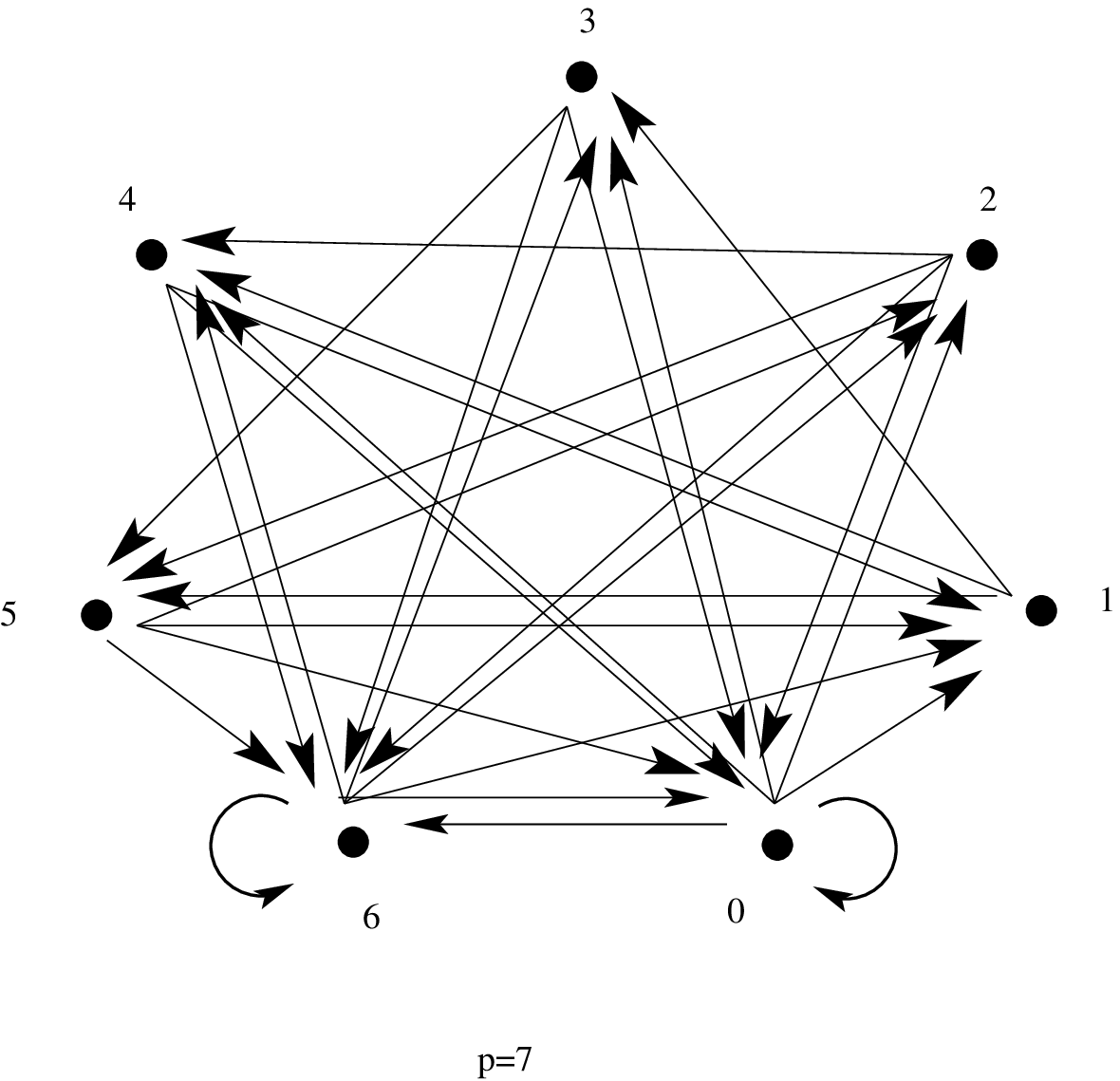}
\caption{$\Ext^1$-quivers for $Z^+(\la)$ ($p=5,7$)} \Label 
{Fig:Ext1verma}
\end{figure}

\subsection{}\Label{SS:VermaDercomp} We will prove the theorem in two  
main steps. Our first step is to
compute the weight spaces $\Der(\fn^+, Z^+(\la))_{\mu}$ and $\Inn(\fn^ 
+, Z^+(\la))_{\mu}$. There exists the following isomorphism relating   
the (ordinary) Lie algebra cohomology with derivations modulo inner  
derivations \cite[VII. Proposition 2.2]{HS}:
$$\text{H}^{1}(\fn^+,Z^+(\la))_{\mu}\cong \Der(\fn^+, Z^+(\la))_ 
{\mu}/ \Inn(\fn^+, Z^+(\la))_{\mu}.$$
The second step (in Section \ref{SS:ExtvsH1}) will be to equate $\Ext^ 
{1}$ between Verma modules with the weight spaces of the first Lie  
algebra cohomology. Surprisingly, the verification of the second step  
will rely on the explicit computation of derivations in the first step.

We first consider $\Inn(\fn^+, Z^+(\la))_{\mu}$.   If $0<j<p$, then $ 
\Inn(\fn^+, Z^+(\la))_{\la+j}$ is generated by the derivation $d_j$  
defined by
$$d_j(e_k)=e_k. m_{j-1}=
\begin{cases} \  (j+k+(k+1)\la)m_{j+k-1}\  &\text{ if } k \leq p-j \\  
\ 0\  &\text{ otherwise.}
\end{cases}
$$
Then $d_j =0$ if and only if $j=1$ and $\la=p-1$, or $j=p-1$ and $ 
\la=0$.  If $j=0$, there is no inner derivation. Therefore,
\begin{equation} \Label{Eq:Inn}
\Inn(\fn^+, Z^+(\la))_{\la+j} \cong
\begin{cases}
\ 0 \ \text{ if } j=0;\ j=1, \la=p-1; \text{ or }  j=p-1,\la=0, \\
\ k \ \text{ otherwise.}
\end{cases}
\end{equation}

To compute $\Der(\fn^+, Z^+(\la))_{\la+j}$, we first assume that $p>7 
$.  For $n\in\Z$, let $\overline n$ denote the unique integer  
satisfying $0\le \overline n < p$ and $\overline n \equiv n \pmod p$.
Given $d \in \Der(\fn^+, Z^+(\la))_{\la+j}$, we can write $d(e_k) =  
a_k m_{\overline{j+k-1}}$ for some $a_{k}\in k$.  Since $\fn^+$ is  
generated by $e_1$ and $e_2$, all $a_k$ are determined by $a_1$ and  
$a_2$ using the definition of a derivation, and thus $\dim \Der(\fn^+, Z^+ 
(\la))_{\la+j} \leq 2$ .
Observe that $d$ will be a derivation for any choice of $a_1, a_2 \in  
k$ such that
\begin{gather}
\Label{Eq:der1} d(e_k) \text{ is well-defined for each } 2<k\leq p-2; \\
\Label{Eq:der2} d([e_i,e_k])=0 \text{ for } i+k>p-2.
\end{gather}

Suppose $p-4 \leq j \leq p-2$. Since $e_3$ and $e_4$ are uniquely  
generated by $e_1$ and $e_2$, $d(e_3)$ and $d(e_4)$ are well-defined  
for any choice of $a_1$ and $a_2$.   Moreover, the action of $\fn^+$  
respects the grading on $Z^+(\la)$.  This implies that $d(e_k)=0$ for  
$k>p-j$, so that \eqref{Eq:der1} holds, and \eqref{Eq:der2} holds.   
(For example, if $j=p-3$, $d(e_3)= a_2 e_1 . m_{p-1} + a_1 e_2 . m_ 
{p-2}=0$.) Therefore, for $p-4 \leq j \leq p-2$, $\dim \Der(\fn^+, Z^+ 
(\la))_{\la+j} = 2$.  Since $\dim \Inn(\fn^+, Z^+(\la))_{\la+j}=1$ in  
this case, we have
$$\text{H}^{1}(\fn^+,Z^+(\la))_{\mu}\cong k \quad \mbox{if} \  \la -  
\mu=2,3,4.$$

Suppose $0 \leq j \leq p-5$.  Then for $k>4$, $d(e_k)$ is not  
necessarily well-defined.  For example,
\begin{eqnarray}
d(e_5) &=& d([e_2,e_3])=d([e_2, [e_1, e_2]])  \Label{Eq:e51} \\
&=& \left[ \left((j+3+2\la)(j+5+3\la)-(j+5+4\la)\right) a_2 \right.  
\nonumber \\
&&\left. -(j+3+3\la)(j+5+3\la)a_1\right] m_{j+4}  \nonumber \\
d(e_5) &=&  \frac{1}{3}d([e_1,e_4])=\frac{1}{6} d([e_1,[e_1, 
[e_1,e_2]]]) \Label{Eq:e52} \\
&=& \frac{1}{6} \left[ (j+3+2\la)(j+4+2\la)(j+5+2\la)a_2 \right.  
\nonumber \\
&& - \left((j+3+3\la)(j+4+2\la)(j+5+2\la)+(j+4+4\la)(j+5+2\la)  
\right. \nonumber \\
&& \left. \left. +2(j+5+5\la) \right)a_1 \right] m_{j+4}. \nonumber
\end{eqnarray}
If $d(e_5)$ is well-defined, then
\begin{eqnarray}
0&=& \mbox{\eqref{Eq:e51}} - \mbox{\eqref{Eq:e52}}  = \left( p_{1,5} 
(j,\la)a_1 + p_{2,5}(j,\la)a_2 \right) m_{j+4}, \Label{Eq:e53}
\end{eqnarray}
where
\begin{eqnarray}
p_{1,5}(j,\la)&=&\frac{1}{6} \left(10j+8\la+7j^2+27j\la+20\la^2+16j 
\la^2+j^3+12\la^3+7j^2\la \right) \\
p_{2,5}(j,\la)&=&- \frac{1}{6} \left(6j^2+5j+18j\la+4\la+12\la^2+6j^2 
\la+j^3+12j\la^2+8\la^3 \right).
\end{eqnarray}
Any derivation respects the Jacobi identity, so that $d([e_1,e_5])= d 
([e_1, [e_2, e_3]])=2d([e_2, e_4])$.  Therefore, if $d(e_5)$ is 
well-defined, then $d(e_6)$ is also well-defined.  For $d(e_7)$ to be 
well-defined we must have
\begin{eqnarray}
0&=&d([e_3,e_4])-\frac{1}{3}d([e_2,e_5]) = \left( p_{1,7}(j,\la)a_1+p_ 
{2,7}(j,\la)a_2 \right)m_{j+6},  \Label{Eq:e7}
\end{eqnarray}
where
\begin{eqnarray*}
p_{1,7}(j,\la)&=& \frac{1}{6}\left(14j+12\la+9j^2+39j\la+30\la^2+24j 
\la^2+j^3+18\la^3+9j^2\la \right),\\
p_{2,7}(j,\la)&=&-\frac{1}{6}\left(7j+6\la+8j^2+26j\la+18\la^2+18j 
\la^2+j^3+12\la^3+8j^2\la\right).
\end{eqnarray*}
Note that if $j=p-5$, $p-6$ then in fact $d(e_7)=0$ using the grading  
on $Z^+(\la)$.  In general, it will be enough to consider $d(e_5)$  
and $d(e_7)$ to determine when \eqref{Eq:der1} holds.

We use the Jacobi identity to reduce \eqref{Eq:der2} to the cases $d 
([e_1, e_{p-2}])=0$ and $d([e_2, e_k])=0$ for $k=p-2$, $p-3$.  Because  
the action of $\fn^+$ respects the grading on $Z^+(\lambda)$, these  
equations automatically hold for $2 \leq j \leq p-5$.
For $j=0$, the remaining equations are
\begin{eqnarray}
0&=& d([e_2,e_{p-2}])  = \frac{1}{p-4} d([e_2, [e_1, e_{p-3}]])   
\Label{Eq:j=0a} \\
\nonumber &=& \frac{2}{p-4}\la(3(\la-1)a_{p-3}+3(\la+1)a_1-2a_2)m_ 
{p-1};\\
0&=& d([e_1, e_{p-2}]) = \frac{1}{p-4}d([e_1,[e_1,e_{p-3}]]) \Label 
{Eq:j=0b} \\
\nonumber &=& \frac{2}{p-4}((\la-1)(2\la-1)a_{p-3}+(\la+1)(2\la-3)a_1) m_{p-2};\\
0&=&d([e_2, e_{p-3}])=\left((3\la-1)a_{p-3}+(2\la+1)a_2\right)m_ 
{p-2}. \Label{Eq:j=0c}
\end{eqnarray}
For $j=1$,
\begin{eqnarray}
0&=& d([e_1, e_{p-2}])
=\frac{2}{p-4}\la(2\la-1)(a_{p-3}+a_1)m_{p-1}; \Label{Eq:j=1a} \\
0&=&d([e_2,e_{p-3}])= \la(3a_{p-3}+2a_2)m_{p-1}.  \Label{Eq:j=1b}
\end{eqnarray}
If $j=1$, $d([e_2, e_{p-2}])=0$ automatically from the grading.

Let $j= p-5$.  From \eqref{Eq:e53}, we see that  $\dim \Der(\fn^+, Z^+ 
(\la))_{\la-5}=2$ if
\begin{equation} \Label{Eq:ZExt5}
0=p_{1,5}(p-5,\la)=2\la(\la-1)(\la-4)\quad\text{and} \quad 0= p_{2,5} 
(p-5,\la)=-\frac{4}{3}\la(\la-2)(\la-4).
\end{equation}
Otherwise $\dim \Der(\fn^+, Z^+(\la))_{\la-5} \leq 1$.  Since $\dim  
\Inn(\fn^+, Z^+(\la))_{\la-5}=1$,
$$
\text{H}^{1}(\fn^+,Z^+(\la))_{\la-5}\cong \left\{ \begin{array}{ll}
k & \mbox{if} \ \la=0,4, \\
0 & \mbox{otherwise.}
\end{array} \right.
$$

If $j=p-6$, we may apply the same arguments.  In this case, $0=p_{1,5} 
(p-6,\la)=p_{2,5}(p-6,\la)$ if and only if $\la$ is such that
$2\la^2-10\la+3=0$. Thus
$$
\text{H}^{1}(\fn^+,Z^+(\la))_{\la-6}\cong \left\{ \begin{array}{ll}
k & \mbox{if } \ 2\la^{2}-10\la+3=0, \\
0 & \mbox{otherwise.}
\end{array} \right. 
$$

For $0 \leq j \leq p-7$, $\dim \Der(\fn^+, Z^+(\la))_{\la+j}=2$ only if
\begin{equation} \Label{Eq:Zmiddlecase}
0=p_{1,5}(j,\la)=p_{2,5}(j,\la)=p_{1,7}(j,\la)=p_{2,7}(j,\la).
\end{equation}
Observe that
\begin{equation}  \Label{Eq:Zmiddlecase2}
2\left( p_{1,7}(j,\la)+2p_{2,7}(j,\la) \right)-3\left( p_{1,5}(j,\la) 
+2p_{2,5}(j,\la) \right)=j(j+1)(j+\la).
\end{equation}
Therefore, \eqref{Eq:Zmiddlecase} does not hold for $j \neq 0,p-1,-\la 
$.  By checking $j=0,p-1,-\la$ directly, we find that \eqref 
{Eq:Zmiddlecase} holds precisely for $(j, \la)=(1,p-1), (0,0),(0,p-1) 
$.  Thus, if $1 \leq j \leq p-7$ and $(j, \la) \neq (1, p-1)$, then $ 
\dim \Der (\fn^+, Z^+(\la))_{\la+j}=1$, which implies
$\text{H}^{1}(\fn^+,Z^+(\la))_{\la+j}=0$.

For $(j, \la)=(1, p-1)$, there is a derivation $d \in \Der(\fn^+, Z^+ 
(p-1))_{0}$ defined by $d(e_k)=(k+1)m_k$.  Since \eqref{Eq:j=1a} and  
\eqref{Eq:j=1b} are not identically zero for $\la=p-1$, there is not  
another linearly independent solution.  Using $\Inn(\fn^+, Z^+(p-1))_ 
{0}=0$, we have that $\text{H}^{1}(\fn^+,Z^+(p-1))_{0}\cong k$.

If $j=0$, \eqref{Eq:e53} and \eqref{Eq:e7} are linearly dependent.   
It is straightforward to check that \eqref{Eq:e53}, \eqref{Eq:j=0a},  
\eqref{Eq:j=0b}, and \eqref{Eq:j=0c} have a common solution only for $ 
\la=0$ (corresponding to the derivation $d(e_k)=\frac{k(k+1)}{2}m_ 
{k-1}$) and for $\la=p-1$ (corresponding to the derivation $d(e_1)=m_0 
$, $d(e_k)=0$ for $k\neq1$).  Therefore,
$$
\text{H}^{1}(\fn^+,Z^+(\la))_{\la}\cong \left\{ \begin{array}{ll}
k & \mbox{if} \ \la=0,\, p-1, \\
0 & \mbox{otherwise.}
\end{array} \right. 
$$

Suppose  $j=p-1$.  Then $d: \fn^+ \rightarrow Z^+(\la)$ given by $d 
(e_1)=m_{p-1}$, $d(e_k)=0$ for $k>1$ is a well-defined derivation,  
and so $\dim  \Der(\fn^+, Z^+(\la))_{\la-1}  \geq 1$. Now let $d \in  
\Der (\fn^+, Z^+(\la))_{\la-1}$ be arbitrary.  Since $d(e_1)=a_1 m_ 
{p-1}$, the grading on $Z^+(\la)$ implies that
\begin{eqnarray}
d(e_5) &=& d([e_2, [e_1, e_2]])  \Label{Eq:e51a} \\
&=& \left((j+3+2\la)(j+5+3\la)-(j+5+4\la)\right) a_2 m_{3}  \nonumber \\
d(e_5) &=& \frac{1}{6} d([e_1,[e_1[e_1,e_2]]]) \Label{Eq:e52a} \\
&=& \frac{1}{6} \left[ (j+3+2\la)(j+4+2\la)(j+5+2\la)a_2  \right]m_ 
{3} \nonumber.
\end{eqnarray}
Note that these equations correspond to the terms containing $a_2$  
in  \eqref{Eq:e51} and \eqref{Eq:e52}.  Thus, $d(e_5)$ is well- 
defined if and only if $0=p_{2,5}(-1, \la)a_2=-\frac{4}{3}\la(\la-1) 
(\la+1)a_2$.  Similarly, $d(e_7)$ is well-defined if and only if $0=p_ 
{2,7}(-1,\la)a_{2}=-2\la(\la-1)(\la+1)a_2$. Thus, for $a_2 \neq 0$, $d 
(e_k)$ is well-defined for all $k$ only if $\la =0,1,-1$.  Recalling  
\eqref{Eq:der2}, we observe
\begin{eqnarray}
0&=& d([e_2,e_{p-2}]) = \frac{1}{p-4}\left((3\la-1)(2\la-3)a_{p-3} -4 
(\la+1)a_2 \right)m_{p-2};  \Label{Eq:j=-1a}  \\
0&=& d([e_1, e_{p-2}])  =  \frac{2}{p-4}(\la -1)(2\la-3) a_{p-3}m_ 
{p-3}. \Label{Eq:j=-1b}
\end{eqnarray}
When $\la=0$, these equations have no common solution with $a_2 \neq 0 
$ so that $\dim  \Der(\fn^+, Z^+(0))_{-1}  =1$.   It can be checked  
that for $\la =1$, $d(e_1)=0$ and $d(e_k)={k+1 \choose 3}  m_{k-2}$,  
$k \neq 1$, is a well-defined derivation; and for $\la =-1$, $d(e_2) 
=m_0$, $d(e_k) =0$, $k \neq 1$, is a well-defined derivation.  This  
implies $\dim  \Der(\fn^+, Z^+(\la))_{\la-1}  =2$ if $\la =1,-1$.   
Recalling that $\Inn (\fn^+, Z^+(0))_{-1}=0$, we have
$$
\text{H}^{1}(\fn^+,Z^+(\la))_{\la-1} \cong
\left\{
\begin{array}{ll}
k & \mbox{if} \ \la=0,1,-1, \\
0 & \mbox{otherwise.}
\end{array} \right. 
$$

Now suppose $p=7$.  Then $\Der(\fn^+, Z^+(\la))_{\la+j}$ can be  
computed as above for $2 \leq j \leq 6$ and for $j=0$.     For  
$j=1=7-6$, the reasoning above  shows that $\dim \Der(\fn^+, Z^+(\la)) 
_{\la+j}\leq 1$ for all $\la$, since $2\la^2-10\la+3 \neq 0$, and $ 
\Der(\fn^+, Z^+(p-1))_{0}\cong k$.   This produces the stated results.

Finally, assume $p=5$. Then $\Der(\fn^+, Z^+(\la))_{\la+j}$ can be  
computed as above for $2=5-3 \leq j \leq 3=5-2$.  For $j=1$ and $d  
\in \Der(\fn^+, Z^+(\la))_{\la+1}$ it is only necessary to check that  
\eqref{Eq:j=1a} and \eqref{Eq:j=1b} hold.  Note that \eqref{Eq:j=1b}  
always holds, and \eqref{Eq:j=1a} holds if $a_1=-a_2$ or $\la=0,3$.  
Combining this with \eqref{Eq:Inn}, we have
$$
\text{H}^{1}(\fn^+,Z^+(\la))_{\la+1}
\cong \left\{ \begin{array}{ll}
k & \mbox{if} \ \la=0,3, \\
0 & \mbox{otherwise.}
\end{array} \right. 
$$
For $j=0$,  $d \in \Der(\fn^+, Z^+(\la))_{\la}$ if and only if \eqref 
{Eq:j=0a}, \eqref{Eq:j=0b}, and \eqref{Eq:j=0c} hold.  There is one  
solution for these equations for $\la=0,4$ and no solutions for other  
$\la$.  Since $\Inn(\fn^+, Z^+(\la))_{\la}=0$,
$$
\text{H}^{1}(\fn^+,Z^+(\la))_{\la}\cong \left\{ \begin{array}{ll}
k & \mbox{if} \ \la=0,4, \\
0 & \mbox{otherwise.}
\end{array} \right. 
$$

In the case $j=p-1$, $\dim \text{Der}(\fn^+, Z^+(\la))_{\la-1}=2$ if  
and only if \eqref{Eq:j=-1a} and \eqref{Eq:j=-1b} hold.  This occurs  
for $\la=1,4$.  For $\la =0$, we have $\dim  \text{Der}(\fn^+, Z^+ 
(\la))_{\la-1}=1$ since $d(e_1)=m_{p-1}$ is a well-defined  
derivation. Therefore,
$$
\text{H}^{1}(\fn^+,Z^+(\la))_{\la-1}\cong \left\{ \begin{array}{ll}
k & \mbox{if} \ \la=0,1,4, \\
0 & \mbox{otherwise.}
\end{array} \right. 
$$

\subsection{} \Label{SS:ExtvsH1} In the following proposition, we  
identify the restricted Lie algebra cohomology  for extensions  
between Verma modules
with the ordinary Lie algebra cohomology.

\begin{prop} \Label{P:Derivation} Let $\lambda, \mu\in X$. Then
$$\Ext^1_{u(\fg)} (Z^+(\mu), Z^+(\la)) \cong \operatorname{H}^{1}(\fn^ 
+,Z^+(\la))_{\mu}\cong
\left(\Der(\fn^+, Z^+(\la))/\Inn(\fn^+, Z^+(\la))\right)_{\mu} .$$
\end{prop}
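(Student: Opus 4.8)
The second isomorphism is the standard identification of the first Lie algebra cohomology with derivations modulo inner derivations, already recorded above from \cite[VII.~Proposition~2.2]{HS}, so the content lies entirely in the first isomorphism $\Ext^1_{u(\fg)}(Z^+(\mu),Z^+(\la))\cong \operatorname{H}^1(\fn^+,Z^+(\la))_\mu$. The plan is to pass through the Borel subalgebra and then through the nilpotent radical, and finally to reconcile restricted with ordinary cohomology. Since $Z^+(\mu)=u(\fg)\otimes_{u(\fb^+)}\mu$ is induced and $u(\fg)$ is free over $u(\fb^+)$ by PBW, induction is exact and carries projectives to projectives; applying it to a projective resolution of the one-dimensional module $\mu$ and invoking the tensor--hom adjunction yields the Frobenius reciprocity isomorphism $\Ext^1_{u(\fg)}(Z^+(\mu),Z^+(\la))\cong\Ext^1_{u(\fb^+)}(\mu,Z^+(\la))$. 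Writing $\fb^+=\ft\ltimes\fn^+$ and using that $\ft=ke_0$ is a torus (so $u(\ft)$ is semisimple, hence linearly reductive), one identifies $\Ext^1_{u(\fb^+)}(\mu,Z^+(\la))$ with the $\mu$-weight space of $\Ext^1_{u(\fn^+)}(k,Z^+(\la))$, i.e.\ with the weight space $\operatorname{H}^1_{\mathrm{res}}(\fn^+,Z^+(\la))_\mu$ of the \emph{restricted} cohomology of $\fn^+$. The weight bookkeeping matches: the derivation computation of~\ref{SS:VermaDercomp} shows that a class built from $d(e_k)=a_km_{\overline{j+k-1}}$ has $\ft$-weight $\la+j$, which is $\mu$ exactly when $\mu=\la+j$.

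It remains to prove that restricted and ordinary cohomology agree in degree one, $\operatorname{H}^1_{\mathrm{res}}(\fn^+,Z^+(\la))_\mu\cong\operatorname{H}^1(\fn^+,Z^+(\la))_\mu$. There is always a natural inclusion of the former into the latter. Indeed, a $u(\fn^+)$-extension $0\to Z^+(\la)\to E\to k\to 0$ is nothing but an ordinary derivation $d\colon\fn^+\to Z^+(\la)$ whose associated extension $E=Z^+(\la)\oplus k$ is a \emph{restricted} module; comparing the action of $e_i^p$ with that of $e_i^{[p]}$ on $E$, and using that the defining ideal of $u(\fn^+)$ is generated by the $e_i^p-e_i^{[p]}$ over a basis together with $e_i^{[p]}=0$ for $i\ge1$, this holds precisely when $e_i^{p-1}.d(e_i)=0$ for every basis element $e_i$. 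Inner derivations always satisfy this, since $e_i^{p-1}.(e_i.m)=e_i^p.m=e_i^{[p]}.m=0$. Thus $\operatorname{H}^1_{\mathrm{res}}\hookrightarrow\operatorname{H}^1$ is well defined, and it suffices to show that every ordinary derivation of a given weight already obeys the relations $e_i^{p-1}.d(e_i)=0$.

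The grading on $Z^+(\la)$ disposes of almost all of this for free. For $d$ of weight $\mu=\la+j$ one has $d(e_i)=a_im_{\overline{j+i-1}}$, and since $e_i$ raises the grading by $i$ and truncates to $0$ past $m_{p-1}$, the vector $e_i^{p-1}.d(e_i)$ vanishes automatically whenever $(p-1)i$ exceeds the available range; this covers all $i\ge 2$, and also $i=1$ unless $\overline{j}=0$, that is unless $\mu=\la$. The single case $\mu=\la$ is exactly where the explicit derivations of~\ref{SS:VermaDercomp} become indispensable: there $\operatorname{H}^1(\fn^+,Z^+(\la))_\la$ is nonzero only for $\la=0,p-1$, and for the generators exhibited there $d(e_1)=m_0$, so that $e_1^{p-1}.d(e_1)=\big(\prod_{l=0}^{p-2}(l+2+2\la)\big)m_{p-1}$. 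This product contains a factor divisible by $p$ for each of $\la=0$ (the factor $l=p-2$, giving $p$) and $\la=p-1$ (the factor $l=0$), and therefore vanishes; when $\operatorname{H}^1(\fn^+,Z^+(\la))_\la=0$ there is nothing to check.

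Consequently $\operatorname{H}^1_{\mathrm{res}}=\operatorname{H}^1$ in every weight, and concatenating the three identifications gives the desired chain of isomorphisms. I expect the genuine obstacle to be this final step, the coincidence that ordinary derivations of weight $\mu=\la$ automatically satisfy the restricted cocycle condition. It is forced only by the arithmetic of the module action~\eqref{moduleaction} in characteristic $p$, and this is precisely why the verification cannot be carried out abstractly and must quote the explicit derivation calculation of~\ref{SS:VermaDercomp}.
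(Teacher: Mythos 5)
Your proposal is correct and follows essentially the same route as the paper's proof: Frobenius reciprocity, then the Lyndon--Hochschild--Serre argument using semisimplicity of $u(\ft)$ to extract the $\mu$-weight space of restricted $\fn^{+}$-cohomology, then the comparison of restricted with ordinary cohomology, where the grading reduces the cocycle condition to $e_{1}^{p-1}.\phi(e_{1})=0$ and the explicit derivations of Section~\ref{SS:VermaDercomp} settle the remaining case. The only (immaterial) difference is the order of the final case analysis: the paper first uses the arithmetic of \eqref{moduleaction} to show the obstruction forces $\la=\frac{p-1}{2}$ and then quotes $\H^{1}(\fn^{+},Z^{+}(\la))_{\la}=0$ for that $\la$, while you first note the obstruction lives in weight $\mu=\la$, which is nonzero only for $\la=0,\,p-1$, and then check that the product $\prod_{l=0}^{p-2}(l+2+2\la)$ vanishes there --- the same argument read in the opposite direction.
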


\begin{proof} The second isomorphism was stated earlier so it  
suffices to verify
the first one. First one can apply Frobenius reciprocity to obtain
$$\Ext^1_{u(\fg)}(Z^+(\mu), Z^+(\la))\cong \text{Ext}^{1}_{u 
({\mathfrak b}^{+})}(\mu, Z^{+}(\la)).$$
Now ${\mathfrak n}^{+}$ is an ideal in ${\mathfrak b}^{+}$ with $ 
{\mathfrak b}^{+}/{\mathfrak n}^{+}\cong
{\mathfrak t}$. By applying the Lyndon-Hochschild-Serre spectral  
sequence and the fact that
modules over $u({\mathfrak t})$ are semisimple, one sees that
\begin{eqnarray*}
\text{Ext}^{1}_{u({\mathfrak b}^{+})}(\mu, Z^{+}(\la))&\cong&
\text{Ext}^{1}_{u({\mathfrak n}^{+})}(\mu, Z^{+}(\la))^{u(\mathfrak  
t)}\\
&\cong& \text{Ext}^{1}_{u({\mathfrak n}^{+})}(k, Z^{+}(\la))_{\mu}\\
&\cong& \text{H}^{1}(u({\mathfrak n}^{+}),Z^{+}(\la))_{\mu}.
\end{eqnarray*}

According to \cite[I 9.19]{Jan}, there exists an injection
$$0\rightarrow \text{H}^{1}(u({\mathfrak n}^{+}),Z^{+}(\la))_{\mu} 
\hookrightarrow
\text{H}^{1}({\mathfrak n}^{+},Z^{+}(\la))_{\mu}.$$
Moreover, $\phi\in \text{H}^{1}({\mathfrak n}^{+},Z^{+}(\la))_{\mu}$  
will be in
the image of this map if and only if $\phi(x^{[p]})=x^{p-1}.\phi(x)$  
for all $x\in {\mathfrak n}^{+}$.
However, $x^{[p]}=0$ for all $x\in {\mathfrak n}^{+}$ and the module  
$Z^{+}(\lambda)$ is
${\mathbb Z}$-graded so the aforementioned condition reduces to $e_{1} 
^{p-1}.\phi(e_{1})=0$.
Without loss of generality we may assume that $\phi(e_{1})=m_{0}$.  
 From a direct computation
with the action, $e_{1}^{p-1}.m_{0}= 0$ unless $\lambda=\frac{p-1}{2}$.

Now in this case $e_{0}.m_{0}=\frac{p+1}{2}m_{0}$, so $\phi$ must be  
a derivation
of weight $-1+\frac{p+1}{2}=\frac{p-1}{2}=\la$. However, the  
computations in the previous
section demonstrate that $\text{H}^{1}({\mathfrak n}^{+},Z^{+} 
(\lambda))_{\lambda}=0$
for $\lambda=\frac{p-1}{2}$. Thus, the injection is an isomorphism in  
all cases and
the proposition follows.
\end{proof}

\section{Extensions Between Simple Modules}\Label{S:simple}

\subsection{} \Label{SS:extensionthms} The following theorem  
describes first extension
groups between all simple modules for the restricted simple modules  
of $W(1,1)$.
The proof of the theorem will occupy the remainder of Section~\ref 
{S:simple}.

\begin{thm}[A] \Label{T:ExtLLp=5} Let ${\mathfrak g}=W(1,1)$ and $p=5 
$. Then
\begin{itemize}
\item[(a)] $\Ext^{1}_{u(\fg)}(L(\mu),L(\la)) \cong k$ if
  \begin{itemize}
  \item[(i)] $\la-\mu \equiv 2, 3 \pmod p,\ 1\le\mu\le p-2,\ 1\le\la 
\le p-1$, or
  \item[(ii)] $(\mu,\la)=(0,1), (p-2,0), (p-1,2)$, or $(p-1,3)$
  \end{itemize}
\item[(b)] $\Ext^{1}_{u(\fg)}(L(\mu),L(\la)) \cong k\oplus k$ if $\{\mu,\la\} = \{0,p-1\}$
\item[(c)] $\Ext^{1}_{u(\fg)}(L(\mu),L(\la)) =0$ otherwise.
\end{itemize}
\end{thm}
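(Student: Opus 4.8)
The plan is to reduce everything to the extension data between Verma modules in Theorem~\ref{Thm:ZExt} together with the submodule structure of the $Z^+(\la)$. First I would record, for $p=5$, the structure of the Verma modules: the weight/coefficient analysis behind \eqref{moduleaction} shows that $Z^+(\la)$ is simple (so $L(\la)=Z^+(\la)$, of dimension $5$) for $\la=1,2,3$, while $Z^+(0)$ and $Z^+(4)$ are uniserial of length two. Concretely $e_{-1}.m_j=j\,m_{j-1}$ forces every nonzero submodule to contain a bottom segment $m_0,\dots,m_r$, and checking when the $e_k$ preserve such a segment yields the two nonsplit sequences
$$0\to L(4)\to Z^+(0)\to L(0)\to 0,\qquad 0\to L(0)\to Z^+(4)\to L(4)\to 0,$$
where $L(0)=k$ is the trivial module ($\dim 1$) and $L(4)$ is the $(p-1)$-dimensional simple. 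I would also set up the contragredient duality coming from the anti-automorphism $x\mapsto -x$ of $u(\fg)$: it gives $\Ext^1_{u(\fg)}(L(\mu),L(\la))\cong\Ext^1_{u(\fg)}(L(\la)^{*},L(\mu)^{*})$ with $L(\nu)^{*}\cong L(-1-\nu)$ for $\nu=1,2,3$ and with $L(0),L(4)$ self-dual, which halves the casework.

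Second, I would compute $\Ext^1_{u(\fg)}(\Zmu,L(\la))$ for \emph{all} $\mu,\la$ by the method of Proposition~\ref{P:Derivation}. Since $\Zmu$ is induced from $\fb^+$ for every $\mu$ (not only the generic ones), Frobenius reciprocity and the Lyndon--Hochschild--Serre argument give $\Ext^1_{u(\fg)}(\Zmu,L(\la))\cong \H^1(\fn^+,L(\la))_{\mu}\cong\bigl(\Der(\fn^+,L(\la))/\Inn(\fn^+,L(\la))\bigr)_{\mu}$, subject to the same restricted-versus-ordinary comparison as in the proof of Proposition~\ref{P:Derivation}. For $\la=1,2,3$ the target is $Z^+(\la)$ and the answer is already contained in Theorem~\ref{Thm:ZExt}; for $\la=0$ and $\la=4$ I would redo the (now much shorter) derivation computation on the explicit $1$- and $4$-dimensional modules.

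Third, I would pass from $\Zmu$ to $L(\mu)$ in the first variable. For $\mu=1,2,3$ there is nothing to do, since $L(\mu)=\Zmu$, and the answer is read off from the second step and matches the entries of part~(a)(i). For $\mu\in\{0,4\}$ I would apply $\Hom_{u(\fg)}(-,L(\la))$ to the appropriate one of the two short exact sequences above and run the long exact sequence, inserting the now-known values of $\Ext^{\bullet}(\Zmu,L(\la))$ and the $\Hom$ spaces (which vanish unless two simples coincide). The essential feature is that each such sequence expresses $\Ext^{1}(L(0),L(\la))$ in terms of $\Ext^{1}(L(4),L(\la))$ and vice versa, so the two special simples must be solved for simultaneously.

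The main obstacle is exactly this coupling within the pair $\{L(0),L(4)\}$, namely the four residual cases $(\mu,\la)\in\{0,4\}^{2}$, where $\Ext^2$-terms enter the long exact sequences. I would control these either by self-injectivity of $u(\fg)$ (identifying $\Ext^2(L,-)$ with $\Ext^1(\Omega L,-)$ and tracking that the syzygies of $L(0),L(4)$ cycle through $Z^+(0),Z^+(4)$), or, more cleanly, by importing the Loewy structure of the projective covers $P(L(0)),P(L(4))$ from \cite{FN} and reading $\Ext^1(L(\mu),L(\la))$ off the second radical layer. Either route must reproduce the two qualitatively new phenomena: the self-extensions $\Ext^1(L(0),L(0))=\Ext^1(L(4),L(4))=0$ vanish even though the corresponding Verma self-extensions in Theorem~\ref{Thm:ZExt} do not (consistent with \cite{LN}), and $\Ext^1(L(0),L(4))$ is two-dimensional, reflecting the two inequivalent length-two modules $Z^+(0)$ and $(Z^+(4))^{*}$ with the same composition series. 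Establishing $\dim\Ext^1(L(0),L(4))=2$ (rather than $1$ or $3$) is the single most delicate point.
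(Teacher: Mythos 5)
Your skeleton---generic pairs from Theorem~\ref{Thm:ZExt}, the two short exact sequences \eqref{E:sesZ0} and \eqref{E:sesZpm1}, and duality---is the same as the paper's, and your second step is a genuine, workable variant: the paper never computes $\H^{1}(\fn^{+},L(4))$ directly, but instead reaches $\Ext^{1}(-,L(p-1))$ through long exact sequences, paying for this with its hardest technical step (the vanishing of $\H^{2}(u(\fn^{+}),k)_{p-2}$ via the spectral sequence of \cite{FrPa:87}) and an appeal to \cite{LN} for self-extensions; your direct computation of $\H^{1}(\fn^{+},L(4))_{\mu}$ is small for $p=5$ ($\fn^{+}$ is the three-dimensional Heisenberg algebra, $L(4)$ is four-dimensional) and would make both of those inputs unnecessary. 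The gap is in the step on which you stake the four residual cases $(\mu,\la)\in\{0,4\}^{2}$, which you rightly call the crux: both mechanisms you propose for them fail. The syzygy route rests on a false claim: $Z^{+}(0)$ and $Z^{+}(4)$ are not projective, so \eqref{E:sesZ0} and \eqref{E:sesZpm1} do not compute syzygies and there is no identification $\Ext^{2}(L(0),-)\cong\Ext^{1}(L(4),-)$; indeed $\Omega L(0)=\rad P(L(0))$ has $L(1)$ in its head (since $\Ext^{1}(L(0),L(1))\neq 0$), while $L(1)$ is not even a composition factor of $Z^{+}(0)$ or $Z^{+}(4)$. The route through \cite{FN} is circular: reading $\Ext^{1}(L(\mu),-)$ off the second radical layer of $P(L(\mu))$ presupposes the Loewy series of the PIMs, which is exactly the information the theorem encodes; \cite{FN} provides dimensions, Cartan invariants and Verma filtrations, not radical layers.

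The repair needs no new tools, and in fact no $\Ext^{2}$-terms at all, if you order the deductions correctly. First kill the self-extensions: applying $\Hom(-,k)$ to \eqref{E:sesZ0} and using $\Hom(L(4),k)=0$ gives $\Ext^{1}(L(0),L(0))\hookrightarrow \Ext^{1}(Z^{+}(0),k)\hookrightarrow \H^{1}(\fn^{+},k)_{0}=0$; applying $\Hom(-,L(4))$ to \eqref{E:sesZpm1} and using $\Hom(L(0),L(4))=0$ gives $\Ext^{1}(L(4),L(4))\hookrightarrow \Ext^{1}(Z^{+}(4),L(4))\hookrightarrow \H^{1}(\fn^{+},L(4))_{4}$, which your step-two computation shows is $0$. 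Only then treat the mixed pair: applying $\Hom(-,L(4))$ to \eqref{E:sesZ0}, the terms $\Hom(Z^{+}(0),L(4))=0$, $\Hom(L(4),L(4))=k$, $\Ext^{1}(Z^{+}(0),L(4))\cong \H^{1}(\fn^{+},L(4))_{0}\cong k$ and $\Ext^{1}(L(4),L(4))=0$ assemble into an exact sequence $0\to k\to \Ext^{1}(L(0),L(4))\to k\to 0$, giving dimension two; here the comparison with restricted cohomology is an isomorphism because $x^{[p]}=0$ and $x^{p-1}$ acts as zero on $L(4)$ for every $x\in\fn^{+}$. The case $(4,0)$ then follows by duality. (The paper instead gets the self-extensions from $\fg=[\fg,\fg]$ and from \cite{LN}, and the two-dimensional group from the first-variable long exact sequence of \eqref{E:sesZpm1} mapped to $k$.) Finally, note that the circularity you flagged for $\{0,4\}^{2}$ already afflicts your step three for the mixed cases $(0,\la)$ and $(4,\la)$ with $1\le\la\le 3$: each long exact sequence expresses one of these only as a kernel involving the other. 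The resolution there is the duality you set up, e.g.\ $\Ext^{1}(L(4),L(1))\cong \Ext^{1}(L(3),L(4))=\Ext^{1}(Z^{+}(3),L(4))$, which is known from step two; make that use of duality explicit rather than relying on the long exact sequences alone.
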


\begin{thm}[B] \Label{T:ExtLLp>5} Let ${\mathfrak g}=W(1,1)$ and $p 
\geq 7$. Then
\begin{itemize}
\item [(a)] $\Ext^{1}_{u(\fg)}(L(\mu),L(\la)) \cong k$ if
  \begin{itemize}
  \item[(i)] $\la-\mu \equiv 2, 3, 4 \pmod p,\ 1\le\mu\le p-2,\ 1\le 
\la\le p-1$, or
  \item[(ii)] $(\mu,\la)=(0,1), (p-2,0), (p-1,2), (p-1,3)$, or $ 
(p-1,4)$, or
  \item[(iii)] $2\la^{2}-10\la+3\equiv 0 \pmod p,\ \la-\mu\equiv 6  
\pmod p,\ 1\le \mu,\la \le p-2$
  \end{itemize}
\item[(b)] $\Ext^{1}_{u(\fg)}(L(\mu),L(\la)) \cong k\oplus k$ if $\{\mu,\la\} = \{0,p-1\}$
\item[(c)] $\Ext^{1}_{u(\fg)}(L(\mu),L(\la)) =0$ otherwise.
\end{itemize}
\end{thm}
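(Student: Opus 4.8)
The plan is to reduce every group $\Ext^1_{u(\fg)}(L(\mu),L(\la))$ to the Verma computation of Theorem~\ref{Thm:ZExt} together with the trivial-coefficient cohomology $\H^1(\fn^+,k)$. I first record the submodule structure of the Verma modules, which is available from \cite{N1,FN}: $Z^+(\la)=L(\la)$ is simple of dimension $p$ for $1\le\la\le p-2$, while the two remaining Verma modules are non-split uniserial,
$$
0\to L(p-1)\to Z^+(0)\to L(0)\to 0,\qquad 0\to L(0)\to Z^+(p-1)\to L(p-1)\to 0,
$$
with $L(0)=k$ the trivial module and $\dim L(p-1)=p-1$. I call $\la$ \emph{generic} when $1\le\la\le p-2$ (so that $L(\la)=Z^+(\la)$) and \emph{special} when $\la\in\{0,p-1\}$, and I organize the computation according to which of $\mu,\la$ are special.

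When \emph{both weights are generic} one has $\Ext^1_{u(\fg)}(L(\mu),L(\la))=\Ext^1_{u(\fg)}(Z^+(\mu),Z^+(\la))$, read off directly from Theorem~\ref{Thm:ZExt}; since every exceptional pair in part (ii) of that theorem has a special coordinate, this case contributes exactly the families $\la-\mu\equiv2,3,4$ of (a)(i) (with $\la\le p-2$) and the quadratic family (a)(iii). When the \emph{source is generic and the target special}, the identity $L(\mu)=Z^+(\mu)$ lets me apply Frobenius reciprocity and the Lyndon--Hochschild--Serre argument of Proposition~\ref{P:Derivation} to get $\Ext^1_{u(\fg)}(L(\mu),L(\la))\cong\H^1(u(\fn^+),L(\la))_\mu$. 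For $\la=0$ the coefficients are trivial and $\H^1(u(\fn^+),k)=\H^1(\fn^+,k)=(\fn^+/[\fn^+,\fn^+])^{*}$ is two dimensional with weights $-1,-2$; only $\mu=p-2$ survives the generic range, producing the pair $(p-2,0)$. For $\la=p-1$ I avoid a fresh derivation computation by applying $\Hom_{u(\fg)}(Z^+(\mu),-)$ to the first sequence above: with $\Hom(Z^+(\mu),L(0))=0$ and $\Ext^1(Z^+(\mu),L(0))=\H^1(\fn^+,k)_\mu$, the long exact sequence collapses to $\Ext^1(Z^+(\mu),L(p-1))\cong\Ext^1(Z^+(\mu),Z^+(0))$ for generic $\mu\ne p-2$, which Theorem~\ref{Thm:ZExt} evaluates to $k$ precisely when $\mu\in\{p-3,p-4,p-5\}$, i.e. $\la-\mu\equiv2,3,4$.

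The case of a \emph{special source and generic target} I reduce to the previous one by the contravariant exact duality $M\mapsto M^{*}$, which sends simples to simples: $L(0)$ and $L(p-1)$ are self-dual while $L(\la)^{*}\cong L(p-1-\la)$ for generic $\la$, so $\Ext^1(L(\mu),L(\la))\cong\Ext^1(L(\la)^{*},L(\mu)^{*})$ now has a generic source and special target. Finally, for \emph{both weights special}, the diagonal pairs $(0,0)$ and $(p-1,p-1)$ vanish because type $W$ has no self-extensions \cite{LN}; for the off-diagonal pair I apply $\Hom_{u(\fg)}(-,L(0))$ to the second sequence above and use $\Hom(Z^+(p-1),L(0))=0$, $\Ext^1(Z^+(p-1),L(0))=\H^1(\fn^+,k)_{p-1}\cong k$, and $\Ext^1(L(0),L(0))=0$ to obtain a short exact sequence $0\to k\to\Ext^1(L(p-1),L(0))\to k\to0$. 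Hence $\Ext^1(L(p-1),L(0))\cong k\oplus k$, and the same holds for $\{0,p-1\}$ by duality; this is part (b). Collecting the four cases gives (a), (b), (c).

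The hard part will be pinning down the few connecting homomorphisms that occur where two one-dimensional groups meet. The sharpest instance is the generic source $\mu=p-2$ with target $L(p-1)$: there both $\Ext^1(Z^+(p-2),Z^+(0))$ and $\Ext^1(Z^+(p-2),L(0))$ are one dimensional, and I must show the map between them induced by $Z^+(0)\twoheadrightarrow L(0)$ is injective in order to conclude $\Ext^1(L(p-2),L(p-1))=0$, as required since $\la-\mu\equiv1$. I expect to settle such points by exhibiting the extension class explicitly or by tracing a highest weight vector through the pushout, rather than by a dimension count alone. A lesser point is the restricted-versus-ordinary comparison of \cite[I 9.19]{Jan}: it is transparent for the trivial coefficient module but should be recorded wherever $\H^1(u(\fn^+),-)$ is replaced by $\H^1(\fn^+,-)$. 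Lastly, the prime $p=7$ (like $p=5$ in Theorem A) will need separate attention, since there several of the distinguished weights coincide and, for instance, the quadratic in (iii) has no root, exactly the sort of degeneration already seen in the Verma computation of \S\ref{SS:VermaDercomp}.
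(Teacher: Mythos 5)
Your case decomposition is essentially the paper's own: generic pairs read off from Theorem~\ref{Thm:ZExt}, extensions into $L(0)$ via $\H^{1}(\fn^{+},k)$, extensions into $L(p-1)$ via the long exact sequence attached to $0\to L(p-1)\to Z^{+}(0)\to k\to 0$, dualization using $L(\mu)^{*}\cong L(p-1-\mu)$, vanishing of self-extensions via \cite{LN}, and the $k\oplus k$ in part (b) from the sequence $0\to k\to Z^{+}(p-1)\to L(p-1)\to 0$. All of those steps are carried out correctly and match the paper.

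However, there is one genuine gap, and you have located it yourself: the pair $(\mu,\la)=(p-2,p-1)$ (and its dual $(p-1,1)$), where the long exact sequence pins $\Ext^{1}_{u(\fg)}(L(p-2),L(p-1))$ between two one-dimensional groups and a dimension count cannot decide the answer. Saying you ``expect to settle such points by exhibiting the extension class explicitly'' is a plan, not a proof, and this is precisely the one case in the whole theorem that requires an idea beyond Theorem~\ref{Thm:ZExt} and $\H^{1}(\fn^{+},k)$. The paper resolves it by a different route: it uses the \emph{other} short exact sequence $0\to k\to Z^{+}(p-1)\to L(p-1)\to 0$, so that the unknown group is trapped in a four-term exact sequence $0\to k\to k\to \Ext^{1}_{u(\fg)}(L(p-2),L(p-1))\to \Ext^{2}_{u(\fg)}(Z^{+}(p-2),k)$, and then proves $\Ext^{2}_{u(\fg)}(Z^{+}(p-2),k)\cong\H^{2}(u(\fn^{+}),k)_{p-2}=0$ using the Friedlander--Parshall spectral sequence $E_{1}^{2i,j}=S^{i}((\fn^{+})^{*})^{(1)}\otimes\H^{j}(\fn^{+},k)\Rightarrow\H^{2i+j}(u(\fn^{+}),k)$ together with a leading-term argument showing no linear combination of $e_{r}^{*}\wedge e_{p-r+2}^{*}$ survives $d_{2}$ in $\Lambda^{2}(\fn^{+})^{*}$. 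Your route is in fact salvageable without $\H^{2}$: the needed injectivity of $\Ext^{1}(Z^{+}(p-2),Z^{+}(0))\to\Ext^{1}(Z^{+}(p-2),k)$ follows from the explicit derivations of Section~\ref{SS:VermaDercomp}. For $\la=0$, $j=p-2$ one has $\dim\Der(\fn^{+},Z^{+}(0))_{-2}=2$ with $d(e_{2})=a_{2}m_{p-1}$ and the inner derivation having $a_{2}=0$; since the quotient map $Z^{+}(0)\to k$ kills $m_{0},\dots,m_{p-2}$ and sends $m_{p-1}$ to $1$, the nontrivial class pushes forward to $a_{2}e_{2}^{*}\neq 0$ in $\H^{1}(\fn^{+},k)_{p-2}$, and naturality of the comparison maps in Proposition~\ref{P:Derivation} transports this to the $\Ext^{1}$ level. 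Until you write out such a verification (or reproduce the paper's $\H^{2}$ vanishing), the cases $(p-2,p-1)$ and $(p-1,1)$ of part (c) remain unproven.
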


The $\text{Ext}^{1}$-quivers between simple modules
for $p=5,7$ are shown in Figure \ref{Fig:Ext1simple}.

\psfrag{0}{{\footnotesize$0$}}
\psfrag{1}{{\footnotesize$1$}}
\psfrag{2}{{\footnotesize$2$}}
\psfrag{3}{{\footnotesize$3$}}
\psfrag{4}{{\footnotesize$4$}}
\psfrag{5}{{\footnotesize$5$}}
\psfrag{6}{{\footnotesize$6$}}
\psfrag{7}{{\footnotesize$7$}}
\psfrag{8}{{\footnotesize$8$}}
\psfrag{9}{{\footnotesize$9$}}
\psfrag{10}{{\footnotesize$10$}}
\psfrag{p=5}{{$p=5$}}
\psfrag{p=7}{{$p=7$}}
\begin{figure}[h]
\centering
\includegraphics[height=.25\textheight]{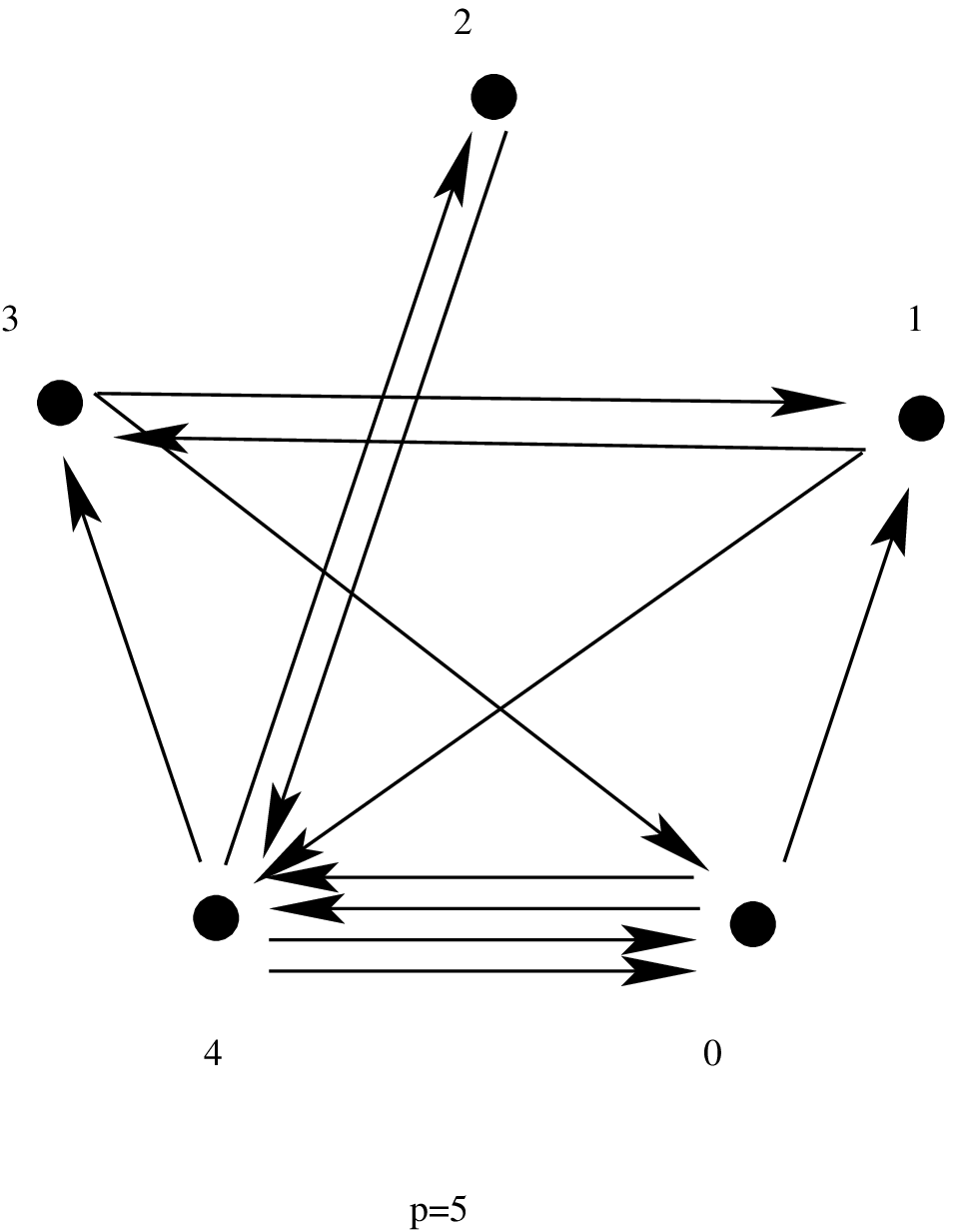}
\hspace{.5in}
\includegraphics[height=.3\textheight]{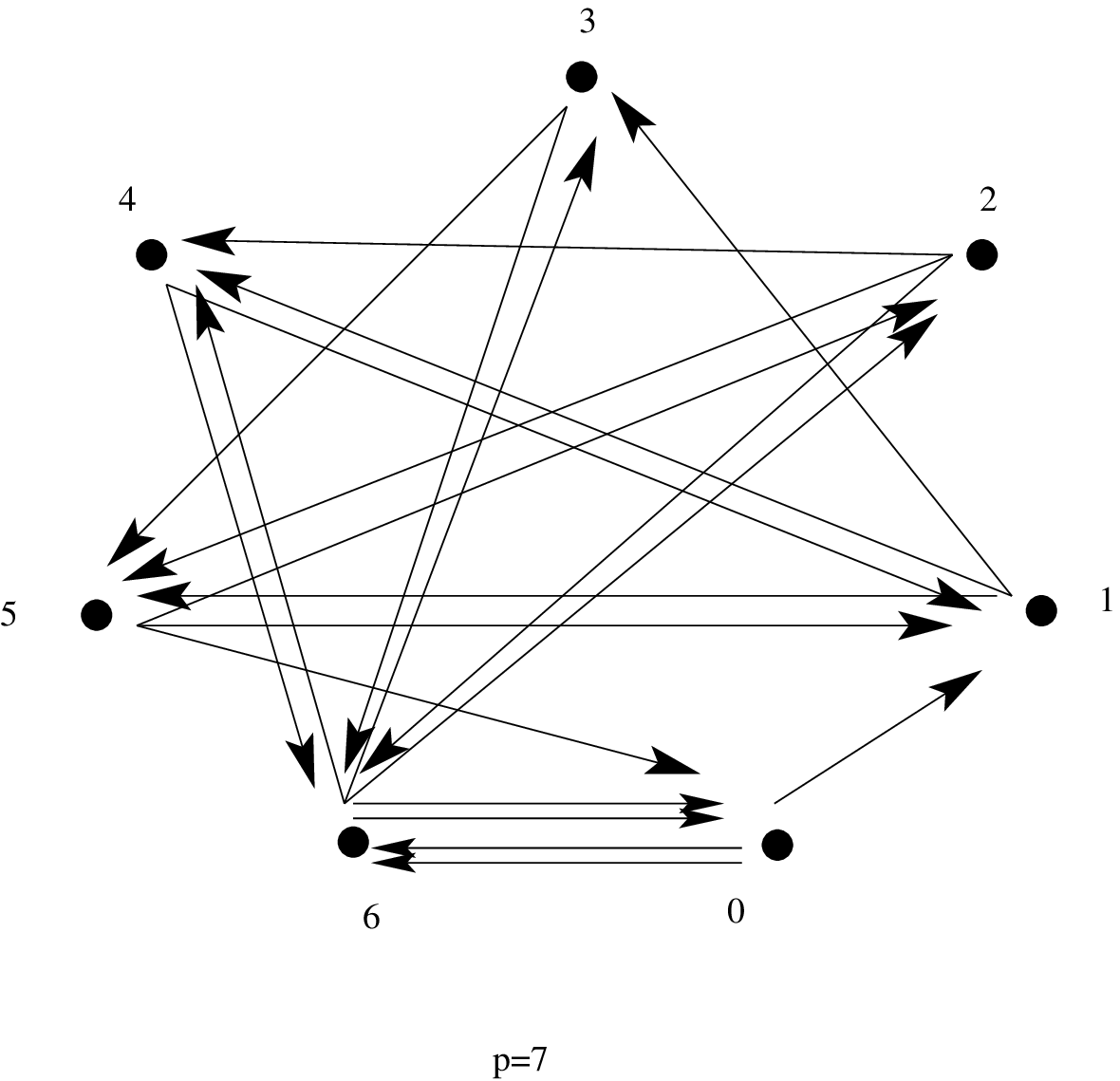}
\caption{$\Ext^1$-quivers for $L(\la)$ ($p=5,7$)} \Label{Fig:Ext1simple}
\end{figure}

\subsection{Extensions not involving $L(0)$ or $L(p-1)$} \Label 
{SS:generic}
Since $L(\la)=Z^{+}(\la)$ for $1\le\la\le p-2$, we have that
\begin{equation} \Label{E:genericLext}
\Ext^{1}_{u(\fg)}(L(\mu),L(\la)) = \Ext^{1}_{u(\fg)}(Z^{+}(\mu),Z^{+} 
(\la)) \text{\quad for \quad} 1\le \mu,\la \le p-2.
\end{equation}
These extensions were computed in Section \ref{SS:Vermaextension}. So  
it remains to compute the extensions
involving $L(0)\cong k$ or $L(p-1)$.

\subsection{Extensions involving $L(0)$} \Label{SS:L0}
As in the proof of Proposition \ref{P:Derivation}, we have
\begin{equation} \Label{E:Ext1asH1}
\begin{aligned}
\Ext^{1}_{u(\fg)}(\Zmu,k)  &\cong \H^{1}(u(\fn^{+}),k)_{\mu} \\
  &\cong \H^{1}(\fn^{+},k)_{\mu} \\
&\cong
  \begin{cases}
  k, &\mu=p-1,\, p-2,\\
  0, &\text{otherwise}.
  \end{cases}
\end{aligned}
\end{equation}
For the first two isomorphisms we are using \cite[(2.7) Proposition] 
{Fe} and the fact that
$p$-powers of elements in $\fn^{+}$ are zero. The last isomorphism  
follows because
$\H^{1}(\fn^{+},k)$ is spanned by the classes of $e_{1}^{*}$ and $e_ 
{2}^{*}$,
with weights $-1$ and $-2$. In particular
\begin{equation} \Label{E:LmuL0}
\Ext^{1}_{u(\fg)}(L(\mu),L(0))=
\begin{cases}
  0, &1\le \mu \le p-3,\\
  k, &\mu=p-2.
  \end{cases}
\end{equation}
We postpone the case $\mu=p-1$ to Section \ref{SS:L0pm1}.

We can obtain extensions with $L(0)$ in the first variable by using  
the identity
\begin{equation} \Label{E:extduality}
\Ext^{1}_{u(\fg)}(L(\mu),L(\la)) \cong \Ext^{1}_{u(\fg)}(L(\la)^{*},L 
(\mu)^{*}).
\end{equation}
Note that $L(0)$ and $L(p-1)$ are isomorphic to their own duals,  
since they are the only simple modules of dimension 1 and $p-1$,  
respectively. For $\mu\ne 0,\ p-1$, $L(\mu)$ has dimension $p$ and  
weights (from highest to lowest)
$$
\mu, \mu-1, \dots, \mu-p+1.
$$
Therefore $L(\mu)^{*}$ has weights (from highest to lowest)
$$
p-\mu-1, p-\mu-2, \dots, -\mu.
$$
In conclusion,
\begin{equation} \Label{E:simpleduals}
L(\mu)^{*}\cong
\begin{cases}
L(\mu), &\mu=0,\, p-1,\\
L(p-1-\mu), &\text{otherwise}.
\end{cases}
\end{equation}

Combining \eqref{E:LmuL0}, \eqref{E:extduality}, and \eqref 
{E:simpleduals} we deduce
\begin{equation} \Label{E:L0Lmu}
\Ext^{1}_{u(\fg)}(L(0),L(\la))=
\begin{cases}
  k, &\la=1,\\
  0, &2\le \la \le p-2.
  \end{cases}
\end{equation}

\subsection{Extensions involving $L(p-1)$} \Label{SS:Lpm1}
Consider the short exact sequence
\begin{equation} \Label{E:sesZ0}
0\to L(p-1)\to Z^{+}(0) \to k \to 0.
\end{equation}
Given any $\mu$ we have a corresponding long exact sequence
\begin{multline} \Label{E:lesZ0}
\cdots \to \Hom_{u(\fg)}(\Zmu,k) \to \Ext^{1}_{u(\fg)}(\Zmu,L(p-1))  
\to \Ext^{1}_{u(\fg)}(\Zmu,Z^{+}(0)) \\
  \to \Ext^{1}_{u(\fg)}(\Zmu,k) \to \cdots
\end{multline}
Assume that $\mu\ne 0,\ p-2,\ p-1$. Then the first term is 0, and the  
last term is 0 by \eqref{E:LmuL0}.
So the middle arrow is an isomorphism. Using the results of Section  
\ref{SS:Vermaextension} we deduce
\begin{equation} \Label{E:LmuLpm1}
\Ext^{1}_{u(\fg)}(L(\mu),L(p-1))=
\begin{cases}
  k, &p-5 \le \mu \le p-3\ \ (1\le \mu\le 2 \text{ if } p=5),\\
  0, &1\le \mu \le p-6.
  \end{cases}
\end{equation}
Dually we obtain
\begin{equation} \Label{E:Lpm1Lmu}
\Ext^{1}_{u(\fg)}(L(p-1),L(\la))=
\begin{cases}
  k, &2 \le \la \le 4\ \ (2\le \la\le 3 \text{ if } p=5),\\
  0, &5\le \la \le p-2.
  \end{cases}
\end{equation}

Next consider the short exact sequence
\begin{equation} \Label{E:sesZpm1}
0\to k\to Z^{+}(p-1) \to L(p-1) \to 0.
\end{equation}
We have a corresponding long exact sequence
\begin{multline} \Label{E:lesZpm1}
\cdots \to \Hom_{u(\fg)}(\Zmu,L(p-1)) \to \Ext^{1}_{u(\fg)}(\Zmu,k)  
\to \Ext^{1}_{u(\fg)}(\Zmu,Z^{+}(p-1)) \\
  \to \Ext^{1}_{u(\fg)}(\Zmu,L(p-1)) \to \Ext^{2}_{u(\fg)}(\Zmu,k)  
\to \cdots
\end{multline}

Take $\mu=p-2$ (the cases $\mu=0,\ p-1$ will be treated in the next  
subsection). We want to determine the fourth term of \eqref 
{E:lesZpm1}. The first term is 0, the second term is $k$ by \eqref 
{E:LmuL0}, and the third term is $k$ by Section \ref 
{SS:Vermaextension}. Arguing similarly to \eqref{E:Ext1asH1}, we have
$$
\Ext^{2}_{u(\fg)}(\Zmu,k) \cong \H^{2}(u(\fn^{+}),k)_{\mu} .
$$
There is a spectral sequence constructed in \cite[(1.3) Proposition] 
{FrPa:87}
(since $p>2$)
$$
E_{1}^{2i,j}=S^{i}((\fn^{+})^{*})^{(1)} \otimes \H^{j}(\fn^{+},k)  
\Rightarrow \H^{2i+j}(u(\fn^{+}),k).
$$
Only two terms contribute to $\H^{2}(u(\fn^{+}),k)$. The term with  
$i=1,\ j=0$ is $S^{1}((\fn^{+})^{*})^{(1)} \cong ((\fn^{+})^{*})^{(1)} 
$, but all its weights are multiples of $p$, so there is no  
contribution to the $p-2$ weight space from this term. The term with  
$i=0,\ j=2$ is $H^{2}(\fn^{+},k)$, a subquotient of $\Lambda^{2}(\fn^ 
{+})^{*}$. In the case $p=5$, the $p-2$ weight space is zero.  For  
$p>5$, an ordered basis for the $p-2$ weight space is
\begin{equation} \Label{E:wedge2basis}
e_{4}^{*}\wedge e_{p-2}^{*},\ e_{5}^{*}\wedge e_{p-3}^{*}, \dots, e_ 
{\frac{p-3}{2}}^* \wedge e_{\frac{p-1}{2}}^* .
\end{equation}

Suppose $z$ is a nonzero linear combination of these terms, and let  
$e_{r}^{*}\wedge e_{p-r+2}^{*}$ be the first nonzero term appearing.  
Then $d_{2}$ applied to that term will involve a nonzero multiple $e_ 
{1}^{*}\wedge e_{r-1}^{*}\wedge e_{p-r+2}^{*}$, but this term cannot  
appear in $d_{2}$ of any later basis vector in \eqref{E:wedge2basis}.  
Thus $z$ cannot belong to the kernel of $d_{2}$, and so $\H^{2}(\fn^ 
{+},k)_{p-2}=0$ as well. We conclude that $\H^{2}(u(\fn^{+}),k)_{\mu} 
=0$.

It follows from \eqref{E:lesZpm1} that
\begin{equation}
\Ext^{1}_{u(\fg)}(L(p-2),L(p-1))=0
\end{equation}
%:
and dually that
\begin{equation}
\Ext^{1}_{u(\fg)}(L(p-1),L(1))=0.
\end{equation}

\subsection{Extensions involving only $L(0)$ and $L(p-1)$} \Label 
{SS:L0pm1}
There remains to compute the extensions between $L(0)$ and $L(p-1)$.  
Note that by
\cite[Thm. 5.4]{LN} there are no self-extensions between simple  
modules in this category.
For $L(0)\cong k$ this can be seen directly, because
$$\Ext^{1}_{u({\fg})}(k,k) \cong \H^{1}(u(\fg),k) \cong \fg/[\fg,\fg]_ 
{p} = 0$$
where $[\fg,\fg]_{p}$ is the subalgebra generated by commutators and  
$p$th powers.
In order to compute $\Ext^{1}_{u(\fg)}(L(p-1),L(0))$ we can use  
another long exact sequence associated to \eqref{E:sesZpm1}:
\begin{multline} \Label{E:les2Zpm1}
\cdots \to \Hom_{u(\fg)}(Z^{+}(p-1),k) \to \Hom_{u(\fg)}(k,k) \to  
\Ext^{1}_{u(\fg)}(L(p-1),k) \\
  \to \Ext^{1}_{u(\fg)}(Z^{+}(p-1),k) \to \Ext^{1}_{u(\fg)}(k,k) \to  
\cdots
\end{multline}
The first term is 0, the second term is $k$, the fourth term is $k$  
by \eqref{E:Ext1asH1}, and the last term is 0 by the fact
just mentioned. Hence the middle term must be $k\oplus k$.  That is,
\begin{equation}
\Ext^{1}_{u(\fg)}(L(p-1),L(0))\cong k\oplus k
\end{equation}
and dually
\begin{equation}
\Ext^{1}_{u(\fg)}(L(0),L(p-1))\cong k\oplus k.
\end{equation}

\section{$\text{Ext}^{1}$ between simple non-restricted modules}

 From the definition of the height, one sees that $\chi=0$ if and  
only if $r(\chi)=-1$.
This case coincides with considering $u({\mathfrak g})$-modules.  
Since we have
already dealt with extensions in this setting we can now consider the  
cases when
$0\leq r(\chi) \leq p-1$.

\subsection{$r(\chi)=0$:} In this case there are $p-1$ non-isomorphic
simple $u({\mathfrak g},\chi)$-modules \cite[Hauptsatz $2^{\prime}$] 
{Ch} each
of dimension $p$. These simple modules can be constructed as follows  
(cf.\
\cite[Section 1.3]{FN}).
Since $\chi({\mathfrak b}^{+})=0$ we have $u({\mathfrak b}^{+},\chi)=u({\mathfrak b}^{+})$. 
The simple $u({\mathfrak b}^{+})$ modules are
one-dimensional and given by weights $\lambda\in {\mathbb F}_{p}$. Let
$L(\lambda)=u({\mathfrak g},\chi)\otimes_{u({\mathfrak b}^{+})}\lambda$. Then
$L(\lambda)$ is a simple $u({\mathfrak g},\chi)$-module. Moreover
$\{L(\lambda):\ \lambda=0,1,2,\dots,p-2 \}$ forms a complete set of
non-isomorphic $u({\mathfrak g},\chi)$-modules. We remark that
$L(0)\cong L(p-1)$ in our construction.

The action of $u({\mathfrak n}^{+})$ on $L(\la)$ is given by
\eqref{moduleaction}. Consequently, we can use the calculations in  
Section~\ref{SS:VermaDercomp}
to compute $\text{H}^{1}({\mathfrak n}^{+},L(\lambda))$ for $ 
\lambda=0,1,\dots,p-2$. The same
argument as in Proposition~\ref{P:Derivation} demonstrates that
$$\text{Ext}^{1}_{u({\mathfrak g},\chi)}(L(\mu),L(\lambda))\cong
\text{H}^{1}(u({\mathfrak n}^{+}),L(\lambda))_{\mu}\cong \text{H}^{1} 
({\mathfrak n}^{+},L(\lambda))_{\mu}.$$
Consequently, the extensions between simple modules for $u({\mathfrak  
g},\chi)$ can be obtained by removing
  the node corresponding to $Z^{+}(p-1)$, and all edges attached to  
it, in  the $\text{Ext}^{1}$-quiver
for the Verma modules given in Theorem ~\ref{Thm:ZExt}.

\begin{thm} \Label{Thm:r=0Ext} Let ${\mathfrak g}=W(1,1)$ and $r(\chi) 
=0$. Assume $0 \le \la,\mu \le p-2$.
\begin{itemize}
\item[(a)] For $p=5$,
$$
\Ext^1_{u(\fg,\chi)}(L(\mu),L(\la)) \cong \left\{ \begin{array}{ll}
k & \mbox{if} \ \la - \mu=2,3 \\
& \mbox{or} \ (\mu, \la) = (0,0), (1,0), (0,1),   \\
0 & \mbox{otherwise}.
\end{array} \right.
$$
\item[(b)] For $p \geq 7$,
$$\Ext^1_{u(\fg,\chi)}(L(\mu),L(\la))\cong k$$
if
\begin{itemize}
\item[(i)] $\la - \mu=2,3,4$;
\item[(ii)] $(\mu, \la)=(0,0), (0,1), (-5,0) $;
\item[(iii)] $\lambda$ is such that $2 \la^2-10 \la +3=0$ and $\la- 
\mu=6$.
\end{itemize}
Otherwise, $\Ext^1_{u(\fg,\chi)}(L(\mu),L(\la))= 0$.
\end{itemize}
\end{thm}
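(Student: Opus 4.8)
The plan is to exploit the fact, already established in the paragraph preceding the theorem, that for $r(\chi)=0$ we have the chain of isomorphisms
$$
\Ext^{1}_{u(\fg,\chi)}(L(\mu),L(\la))\cong \H^{1}(u(\fn^{+}),L(\la))_{\mu}\cong \H^{1}(\fn^{+},L(\la))_{\mu}
$$
for all $0\le\mu,\la\le p-2$. Since $\chi(\fn^{+})=0$ and the $p$th powers of elements of $\fn^{+}$ vanish, the action of $\fn^{+}$ on each simple module $L(\la)=u(\fg,\chi)\otimes_{u(\fb^{+})}\la$ is given by exactly the same formula \eqref{moduleaction} as the action of $\fn^{+}$ on the Verma module $Z^{+}(\la)$. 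First I would make this identification precise: as $\fn^{+}$-modules, $L(\la)$ and $Z^{+}(\la)$ are isomorphic for $0\le\la\le p-2$, so the ordinary Lie algebra cohomology $\H^{1}(\fn^{+},L(\la))_{\mu}$ coincides with $\H^{1}(\fn^{+},Z^{+}(\la))_{\mu}$, which was computed explicitly in Section~\ref{SS:VermaDercomp}.

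The second step is to translate the $\Ext^{1}$-quiver between Verma modules (Theorem~\ref{Thm:ZExt}) into the quiver for the $r(\chi)=0$ simple modules. The key structural observation is that the only Verma module that is not itself one of the $L(\la)$ for $0\le\la\le p-2$ is $Z^{+}(p-1)$; indeed $L(\la)=Z^{+}(\la)$ for $1\le\la\le p-2$ as $\fn^{+}$-modules, and $L(0)\cong L(p-1)\cong Z^{+}(0)$ in this construction. Thus the $r(\chi)=0$ quiver is obtained from the Verma quiver by deleting the node $Z^{+}(p-1)$ together with all arrows incident to it, and then relabeling $Z^{+}(0)$ as the single node $L(0)\cong L(p-1)$. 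Concretely, I would go down the list in Theorem~\ref{Thm:ZExt} and discard every entry in which $p-1$ appears as either $\mu$ or $\la$, keeping the rest.

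Carrying this out gives the stated answers. For $p\geq 7$: condition~(i), $\la-\mu=2,3,4$, survives unchanged for $1\le\mu,\la\le p-2$; among the pairs in Theorem~\ref{Thm:ZExt}(b)(ii), the entries $(-1,-1),(-1,0),(0,-1),(-2,-1),(-1,4)$ all involve the weight $-1\equiv p-1$ and are deleted, leaving $(0,0),(0,1),(-5,0)$; and condition~(iii), involving $2\la^{2}-10\la+3=0$ and $\la-\mu=6$, survives with $1\le\mu,\la\le p-2$ since the relevant roots are not $\equiv p-1$. This matches part~(b). For $p=5$ one argues identically from Theorem~\ref{Thm:ZExt}(a): the sets $\{0\},\{4\},\{0,4\},\{3,4\}$ all contain $4\equiv p-1$ and are removed, while $\{0,1\}$ becomes $(0,1)$ and its dual $(1,0)$, and the diagonal entry $(0,0)$ survives, giving part~(a).

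The main obstacle is not analytic but bookkeeping: one must verify carefully that the collapse $L(0)\cong L(p-1)$ does not silently merge or destroy any extension data. In particular, the apparent self-extension $\Ext^{1}(L(0),L(0))\cong k$ in part~(a) and (b) must be traced back to the correct Verma-level source — it comes from the $\la=0$ entry of $\H^{1}(\fn^{+},Z^{+}(\la))_{\la}\cong k$ computed in Section~\ref{SS:VermaDercomp}, and it is genuinely an extension of $L(0)$ by itself at the level of $u(\fg,\chi)$, even though no self-extensions existed in the restricted case, precisely because $Z^{+}(0)$ and $Z^{+}(p-1)$ have been identified. I would confirm that every surviving edge of the Verma quiver lands on a well-defined pair $(\mu,\la)$ with $0\le\mu,\la\le p-2$ and that no two distinct Verma edges collapse onto the same simple-module edge, so that the multiplicities in the theorem are exactly as stated.
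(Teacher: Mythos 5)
Your proposal is correct and takes essentially the same route as the paper: identify the $u(\fn^{+})$-action on $L(\la)$ with formula~\eqref{moduleaction}, invoke the chain $\Ext^{1}_{u(\fg,\chi)}(L(\mu),L(\la))\cong \H^{1}(u(\fn^{+}),L(\la))_{\mu}\cong \H^{1}(\fn^{+},L(\la))_{\mu}$ via Frobenius reciprocity and the argument of Proposition~\ref{P:Derivation}, and then read the answer off Theorem~\ref{Thm:ZExt} by deleting the node $Z^{+}(p-1)$ together with its incident edges. The only blemishes are typographical: your ranges ``$1\le\mu,\la\le p-2$'' should read ``$0\le\mu,\la\le p-2$'', and in the $p=5$ bookkeeping the set $\{0\}$ does not contain $4\equiv p-1$ and is therefore kept, exactly as you in fact conclude when you say that $(0,0)$ survives.
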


\subsection{$r(\chi)=1$:} From \cite[Hauptsatz $2^{\prime}$]{Ch}, we  
have
$p$ isomorphism classes of simple modules each of which has
dimension $p$. We may assume that we
are working with the character $\chi$ where $\chi(e_{i})=0$ for $i 
\neq 0$
and $\chi(e_{0})=1$ because all height $1$ characters are conjugate
to this $\chi$ under the automorphism group of $\fg$. Fix $\xi$ such  
that $\xi^p-\xi-1=0$.
The simple modules will be in bijective correspondence with
$\Lambda(\chi)=\{ \lambda+\xi: \lambda\in {\mathbb F}_{p}\}$. If
$\lambda+\xi\in \Lambda(\chi)$, we can construct $L(\lambda)=u 
({\mathfrak g},\chi)
\otimes_{u({\mathfrak b}^{+},\chi)} (\lambda+\xi)$ where $\lambda+\xi 
$ is
a one-dimensional $u({\mathfrak t},\chi)$-module which is inflated to
$u({\mathfrak b}^{+},\chi)$ by letting $u({\mathfrak n}^{+})$ act  
trivially.

In order to apply our techniques, we can construct a basis for the
$u({\mathfrak b}^{+})$-module
$$
L(\lambda)\otimes -\xi = \langle m_0, \ldots, m_{p-1} \rangle, \quad  
0 \leq \lambda <p
$$
with relations
\begin{eqnarray*}
e_0 . m_j &=& (\lambda + j + 1) m_j \\
e_k . m_j &=& (j+k+1+(k+1)(\lambda+ \xi)) m_{j+k}.
\end{eqnarray*}

\begin{thm} Let ${\mathfrak g}=W(1,1)$ and $r(\chi)=1$.
\begin{itemize}
\item[(a)] For $p =5$,
$$
\Ext^1_{u(\fg,\chi)}(L(\mu), L(\lambda))  \cong   \left\{
\begin{array}{ll}
k & \mbox{if} \ \lambda - \mu = 2,3, \\
0 & \mbox{otherwise}.
\end{array} \right.
$$
\item[(b)] For $p \geq 7$,
$$
\Ext^1_{u(\fg,\chi)}(L(\mu), L(\lambda))  \cong   \left\{
\begin{array}{ll}
k & \mbox{if} \ \lambda - \mu = 2,3,4, \\
0 & \mbox{otherwise}.
\end{array} \right.
$$
\end{itemize}
\end{thm}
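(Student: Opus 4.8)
The plan is to run the same reduction as in Proposition~\ref{P:Derivation}, transporting the problem to $\fn^{+}$-cohomology with the module structure of a single Verma module $Z^{+}(\la+\xi)$, and then to show that the ``generic'' weight $\la+\xi$ avoids every exceptional locus isolated in Section~\ref{SS:VermaDercomp}. First I would apply Frobenius reciprocity to obtain $\Ext^{1}_{u(\fg,\chi)}(L(\mu),L(\la))\cong\Ext^{1}_{u(\fb^{+},\chi)}(\mu+\xi,L(\la))$. Tensoring with the one-dimensional $\fb^{+}$-module on which $e_{0}$ acts by $-\xi$ and $\fn^{+}$ acts by $0$ is an exact equivalence onto $u(\fb^{+})$-modules: on the twist of $L(\la)$ the central element $e_{0}^{p}-e_{0}^{[p]}=e_{0}^{p}-e_{0}$ acts by $\chi(e_{0})^{p}+\bigl((-\xi)^{p}-(-\xi)\bigr)=1+(-1)=0$. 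Since the functor is exact with exact inverse it preserves $\Ext^{1}$, yielding $\Ext^{1}_{u(\fb^{+})}(\mu,L(\la)\otimes(-\xi))$, where the displayed relations show that $L(\la)\otimes(-\xi)$ \emph{is $Z^{+}(\la+\xi)$ as an $\fn^{+}$-module}, while $e_{0}.m_{j}=(\la+j+1)m_{j}$.

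Next I would invoke the Lyndon--Hochschild--Serre spectral sequence for the ideal $\fn^{+}\triangleleft\fb^{+}$ together with the semisimplicity of $u(\ft)$, exactly as in Proposition~\ref{P:Derivation}, to reduce to $\H^{1}(u(\fn^{+}),L(\la)\otimes(-\xi))_{\mu}$, and then pass to ordinary Lie algebra cohomology via the injection of \cite[I 9.19]{Jan}. As there, a class $\phi$ lies in the image precisely when $e_{1}^{p-1}.\phi(e_{1})=0$; this is automatic on every weight space except the one with $\phi(e_{1})\in k\,m_{0}$, which is the $\mu=\la$ space, and there the ordinary cohomology will already be shown to vanish. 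Hence $\Ext^{1}_{u(\fg,\chi)}(L(\mu),L(\la))\cong\H^{1}(\fn^{+},L(\la)\otimes(-\xi))_{\mu}$ in all cases.

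It now remains to read off Section~\ref{SS:VermaDercomp} for the module $L(\la)\otimes(-\xi)$. As an $\fn^{+}$-module this is $Z^{+}(\nu)$ with $\nu:=\la+\xi$, so the existence and dimension of derivations are governed by the same polynomials $p_{1,5},p_{2,5},p_{1,7},p_{2,7}$ and the same boundary analysis, now evaluated at $\nu$; the $e_{0}$-weight of a derivation with $d(e_{k})\in k\,m_{\overline{j+k-1}}$, computed in the twisted module where $e_{0}.m_{j}=(\la+j+1)m_{j}$, equals $\la+j$. The derivation space is uniformly two-dimensional—independently of $\nu$—for the shift values $j=p-2,p-3,p-4$ (only $j=p-2,p-3$ when $p=5$), and there the inner derivations are one-dimensional, so these weight spaces are $k$; they correspond to $\mu=\la+j$, i.e.\ $\la-\mu=2,3,4$ (resp.\ $2,3$). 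Every other weight space that could be nonzero requires $\nu$ to satisfy one of the exceptional conditions of Section~\ref{SS:VermaDercomp}: $\nu\in\{0,\pm1,4\}$ or $2\nu^{2}-10\nu+3=0$.

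The decisive step, which I expect to carry the whole argument, is that $\nu=\la+\xi$ meets none of these conditions. Each condition forces $[\mathbb{F}_{p}(\nu):\mathbb{F}_{p}]\le 2$: the membership conditions put $\nu\in\mathbb{F}_{p}$, and the quadratic puts $\nu$ in an extension of degree at most $2$. But $\xi$ is a root of the Artin--Schreier polynomial $x^{p}-x-1$, which is irreducible over $\mathbb{F}_{p}$, so $[\mathbb{F}_{p}(\la+\xi):\mathbb{F}_{p}]=[\mathbb{F}_{p}(\xi):\mathbb{F}_{p}]=p$; since $p\ge5>2$ this is impossible. (In particular $(\la+\xi)^{p}-(\la+\xi)=\xi^{p}-\xi=1\ne0$, so $\la+\xi\notin\mathbb{F}_{p}$, which already disposes of the $\mu=\la$ space promised above.) Therefore all exceptional weight spaces vanish and $\H^{1}(\fn^{+},L(\la)\otimes(-\xi))_{\mu}\cong k$ exactly when $\la-\mu=2,3,4$ (resp.\ $2,3$ for $p=5$), giving the theorem.
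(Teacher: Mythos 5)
Your proposal is correct and follows essentially the same route as the paper's proof: Frobenius reciprocity combined with the twist by $-\xi$, reduction to ordinary $\fn^{+}$-cohomology with the potential obstruction confined to the weight-$\la$ space (where the cohomology vanishes), and the re-use of the Verma-module derivation computations with $\la$ replaced by $\la+\xi$. Your Artin--Schreier irreducibility argument is just a more explicit justification of the fact the paper invokes directly, namely that $\la+\xi$ is not a root of any polynomial over $\mathbb{F}_{p}$ of degree less than $p$, so every exceptional locus (the values $0,\pm1,4$ and the quadratic $2\nu^{2}-10\nu+3$) is avoided.
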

\begin{proof} Following the proof of Theorem \ref{Thm:ZExt}, we will  
first compute
$\H^{1}({\mathfrak n}^{+},L(\lambda)\otimes -\xi)_{\mu}$. Note that $L 
(\lambda)\otimes -\xi$
is a restricted module for ${\mathfrak b}^{+}$, so the cohomology is  
a restricted ${\mathfrak t}$-module.
Write $\mu=\la+j$ for some $0 \leq j \leq p-1$.
For $d \in \Der(\fn^+, L(\la)\otimes -\xi)_{\la+j}$, $d(e_k) = a_k m_ 
{\overline{j+k-1}}$ for
some $a_k \in k$, as before.

We begin by analyzing the inner derivations of $L(\la)\otimes -\xi$.   
For $j>0$, the inner
derivation $d_j$ defined by $d_j(e_k)=e_k . m_{j-1}$ is nonzero (of  
weight $\la+j$) since
$e_1 . m_{j-1} = (j+1+2(\la+\xi)) m_j  \neq 0$.
Therefore,
\begin{equation}
\Inn (\fn^+, L(\la)\otimes -\xi)_{\la+j} = \left\{
\begin{array}{ll}
k & \mbox{if} \  1 \leq j \leq p-1, \\
0 &\mbox{if} \ j=0.
\end{array} \right. 
\end{equation}

We now compute  $\Der (\fn^+, L(\la)\otimes -\xi)_{\la+j}$, using  
arguments similar to the proof of
Theorem \ref{Thm:ZExt}.  In particular, the calculations in the proof of
Theorem \ref{Thm:ZExt} can be used in this setting by replacing $\la$  
with $\la+\xi$.

We first consider $p>7$.  As in the restricted case, for $p-4 \leq j  
\leq p-2$, we have
$\dim \Der (\fn^+, L(\la)\otimes -\xi)_{\la+j} = 2$.  Thus
$\H^1({\mathfrak n}^{+},L(\lambda)\otimes -\xi)_{\la+j} \cong k$.

Suppose $0 <j \leq p-7$.  Since $\Inn (\fn^+, L(\la)\otimes -\xi)_{\la 
+j}=k$, it is enough to check if
$$\dim \Der (\fn^+, L(\la)\otimes -\xi)_{\la+j} = 2.$$  If this is  
true, we must have
$$
0=p_{1,5}(j, \la+\xi)=p_{2,5}(j, \la+\xi) =p_{1,7}(j, \la+\xi)=p_{2,7} 
(j, \la+\xi),
$$
as in \eqref{Eq:Zmiddlecase}.  Since $j(j+1)(j+\la+\xi) \neq 0$ (cf.\  
\eqref{Eq:Zmiddlecase2}),
this condition fails to hold.  Thus, $\dim \Der (\fn^+, L(\la)\otimes  
-\xi)_{\la+j} = 1$ for $0 <j \leq p-7$,
and so $\H^1({\mathfrak n}^{+},L(\lambda)\otimes -\xi)_{\lambda+j}=0$.

For $j=p-5$, observe that $p_{1,5}(p-5, \la+\xi)$, $p_{2,5}(p-5, \la+ 
\xi) \neq 0$ from
\eqref{Eq:ZExt5}.  Similarly, for $j=p-6$ we have that  $p_{1,6}(p-6,  
\la+\xi)$, $p_{2,6}(p-6, \la+\xi) \neq 0$.   In this case, we use the 
fact that  $3-10(\la+\xi)+2(\la+\xi)^2
\neq 0$ since $\la+\xi$ is not a root of any polynomial over $ 
{\mathbb F}_p$ of degree less than
$p$.  For $j=p-1$, note that $p_{2,5}(p-1, \la+\xi)$, $p_{2,7}(p-1, \la+ 
\xi) \neq 0$.  Therefore,
$\dim \Der (\fn^+, L(\la)\otimes -\xi)_{\la+j} = 1$ for $j=p-5$, $p-6$, 
$p-1$, which implies that $\H^1({\mathfrak n}^{+},L(\lambda)\otimes -\xi)_ 
{\lambda+j}=0$.

Suppose $j=0$, and let $d \in \Der (\fn^+, L(\la)\otimes -\xi)_{\la}$.
Then \eqref{Eq:j=0a}, \eqref{Eq:j=0b}, and \eqref{Eq:j=0c} become
\begin{eqnarray*}
0&=& d([e_2,e_{p-2}])  =  \frac{2}{p-4}(\la+\xi)(3(\la+\xi-1)a_{p-3}+3 
(\la+\xi+1)a_1-2a_2)m_{p-1};\\
0&=& d([e_1, e_{p-2}]) =\frac{2}{p-4}((\la+\xi-1)(2(\la+\xi)-1)a_{p-3} 
+(\la+\xi+1)(2(\la+\xi)-3)a_1)m_{p-2};\\
0&=&d([e_2, e_{p-3}])=((3(\la+\xi)-1)a_{p-3}+(2(\la+\xi)+1)a_2)m_{p-2}.
\end{eqnarray*}
These equations have a common solution only if
$$
0=-3(\la+\xi)(\la+\xi+1)(20(\la+\xi)^2-7(\la+\xi)-11) .
$$
Since there is no $\la\in{\mathbb F}_{p}$ that satisfies this  
equation, we have $\Der (\fn^+, L(\la)\otimes -\xi)_{\la}=0$.

For $p=7$, we may carry over the arguments from the proof of  Theorem  
\ref{Thm:ZExt} to
obtain the result.  If $p=5$, $\dim \Der (\fn^+, L(\la)\otimes -\xi)_ 
{\la+j}$ can be computed as above for
$2 \leq j \leq 3$.  For $j=0,1,4$, the proof of Theorem  \ref 
{Thm:ZExt}  implies
$\Der (\fn^+, L(\la)\otimes -\xi)_{\la+j}=0$.

One can apply the same argument given in Proposition~\ref 
{P:Derivation} to show that
$$
\Ext^1_{u(\fg,\chi)}(L(\mu), L(\lambda))  \cong \H^{1}(u({\mathfrak n} 
^{+}),L(\lambda)\otimes -\xi)_{\mu}\hookrightarrow \H^{1}({\mathfrak n}^{+},L(\lambda)\otimes -\xi)_{\mu}.
$$
Now from the proof of Proposition~\ref {P:Derivation}, if $\phi$ 
is in $\H^{1}({\mathfrak n}^{+},L(\lambda)\otimes -\xi)_{\mu}$, but 
not in $\H^{1}(u({\mathfrak n}^{+}),L(\lambda)\otimes -\xi)_{\mu}$ 
then $e_{1}^{p-1}.m_{0}\neq 0$, and $\phi(e_{1})=m_{0}$. This can occur for any $\lambda$, but
$e_{0}.m_{0}=(\lambda+1)m_{0}$, thus the weight of $\phi$ is $-1+ 
(\lambda+1)=\lambda$. This contradicts our preceding calculation because $\H^{1}(n^{+},L 
(\lambda)\otimes -\xi)_{\lambda}=0$.
Therefore,
$\H^{1}(u({\mathfrak n}^{+}),L(\lambda)\otimes -\xi)_{\mu}\cong
\H^{1}(n^{+},L(\lambda)\otimes -\xi)_{\mu}$ for all $\lambda,\mu\in  
{\mathbb F}_{p}$.
\end{proof}

\subsection{$1<r(\chi)<p-1$:} From \cite[Hauptsatz 1]{Ch} there is a  
unique simple
$u({\mathfrak g},\chi)$-module denoted by $L$. Let $r=r(\chi),\ s:= 
\lfloor\frac{r}{2}\rfloor$ and $k_{\chi}$
be the unique one dimensional simple $u({\mathfrak g}_{s},\chi)$- 
module. Then the simple
module $L\cong u({\mathfrak g},\chi)\otimes_{u({\mathfrak g}_{s}, 
\chi)}k_{\chi}$.
Let $S:=u({\mathfrak b}^{+},\chi)\otimes_{u({\mathfrak g}_{s},\chi)}k_ 
{\chi}$. The
module $S$ is a simple $u({\mathfrak b}^{+},\chi)$-module because $L 
\cong
u({\mathfrak g},\chi)\otimes_{u({\mathfrak b}^{+},\chi)}S$. Moreover,  
$S$ is
the unique simple $u({\mathfrak b}^{+},\chi)$-module. Viewing $\chi 
\in ({\mathfrak b}^{+})^{*}$ then
we have a bilinear form $\beta_{\chi}$ on ${\mathfrak b}^{+}$ defined by
${\beta}_{\chi}(x,y)=\chi([x,y])$. The radical of this form will be  
denoted by
$\text{rad}({\beta}_{\chi})$.

In the case when $r$ is even, there is only one conjugacy class of  
characters $\chi\in\fg^{*}$ of height $r$ under the automorphism group
of ${\mathfrak g}$. A representative of the class is given by $\chi(e_ 
{r-1})=1$ and
$\chi(e_{j})=0$ for $j\neq r-1$ \cite[Theorem 3.1(a)]{FN}. So without  
loss of generality
we may assume for $r$ even that $\text{rad}({\beta}_{\chi})= 
{\mathfrak g}_{r}$, which is an ideal in
${\mathfrak b}^{+}$. Moreover, $\chi({\mathfrak g}_{r})=0$ in this  
setting. For $r$ odd there are infinitely
many conjugacy classes of characters of height $r$ (cf.\ \cite 
[Theorem 3.1(b)]{FN}). The following theorem is an
application of a result proved by Farnsteiner \cite[Theorem 3.8]{Fa}  
which provides a description
of the $\text{Ext}^1$-quiver in $u({\mathfrak b}^{+},\chi)$.

\begin{thm}[A] Suppose that $\operatorname{rad}(\beta_{\chi})$ is an  
ideal of
${\mathfrak b}^{+}$. Then
\begin{itemize}
\item[(a)] $\operatorname{Ext}^{1}_{u({\mathfrak b}^{+},\chi)}(S,S)\cong
\operatorname{H}^{1}(u(\operatorname{rad}(\beta_{\chi})),k)$;
\item[(b)] If $r$ is even then $\operatorname{Ext}^{1}_{u({\mathfrak  
b}^{+},\chi)}(S,S)\cong
({\mathfrak g}_{r}/[{\mathfrak g}_{r},{\mathfrak g}_{r}])^{*}$.
\end{itemize}
\end{thm}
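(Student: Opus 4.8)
The plan is to obtain (a) as a direct application of \cite[Theorem 3.8]{Fa} and then to read off (b) from (a) by an explicit cohomology computation for the nilpotent ideal ${\mathfrak g}_{r}$. First I would record that the hypotheses of Farnsteiner's result are met in our setting: the algebra $u({\mathfrak b}^{+},\chi)$ has a unique simple module $S$, so it is connected with a single block; ${\mathfrak b}^{+}=ke_{0}\ltimes{\mathfrak n}^{+}$ is solvable; and by assumption $\operatorname{rad}(\beta_{\chi})$ is an ideal of ${\mathfrak b}^{+}$. Under exactly these hypotheses Farnsteiner's theorem identifies the self-extension group of the unique simple module with the first cohomology of the reduced enveloping algebra of $\operatorname{rad}(\beta_{\chi})$ with trivial coefficients, which is the assertion of (a).

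The one point in (a) that deserves care is why the coefficient module is the trivial module $k$ and not a $\chi$-twist. Here I would use that $\beta_{\chi}$ restricts to the zero form on $\operatorname{rad}(\beta_{\chi})$: if $x\in\operatorname{rad}(\beta_{\chi})$ then $\chi([x,{\mathfrak b}^{+}])=0$, so in particular $\chi([x,y])=0$ for all $y\in\operatorname{rad}(\beta_{\chi})$. Consequently the restriction of $S$ to $u(\operatorname{rad}(\beta_{\chi}),\chi)$ is the unique simple module for an algebra on which the form vanishes, and Farnsteiner's identification replaces the twisted coefficients by the trivial module. In the even case this is completely transparent, since we are given $\chi({\mathfrak g}_{r})=0$, so that $u(\operatorname{rad}(\beta_{\chi}),\chi)=u({\mathfrak g}_{r})$ outright.

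To deduce (b) I would specialize (a) to $r$ even, where we are given $\operatorname{rad}(\beta_{\chi})={\mathfrak g}_{r}$, so that
$$\operatorname{Ext}^{1}_{u({\mathfrak b}^{+},\chi)}(S,S)\cong \operatorname{H}^{1}(u({\mathfrak g}_{r}),k).$$
Since $r\geq 2$, we have ${\mathfrak g}_{r}\subseteq{\mathfrak n}^{+}$, and the $p$-powers of elements of ${\mathfrak n}^{+}$ vanish (as used in \S\ref{SS:L0}); hence the $p$-operation on ${\mathfrak g}_{r}$ is identically zero. Exactly as in \S\ref{SS:L0}, using \cite[(2.7) Proposition]{Fe} together with this vanishing, one obtains $\operatorname{H}^{1}(u({\mathfrak g}_{r}),k)\cong\operatorname{H}^{1}({\mathfrak g}_{r},k)$, and the latter is the standard description of first Lie algebra cohomology with trivial coefficients as $({\mathfrak g}_{r}/[{\mathfrak g}_{r},{\mathfrak g}_{r}])^{*}$. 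This gives (b).

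The main obstacle will be matching the hypotheses of \cite[Theorem 3.8]{Fa} to the present situation and, in particular, correctly identifying the coefficient module in (a) as the trivial module rather than a $\chi$-twisted line. Once (a) is in hand, the passage to (b) is routine, relying only on the vanishing of the $p$-operation on ${\mathfrak g}_{r}$ and the standard abelianization formula for $\operatorname{H}^{1}$.
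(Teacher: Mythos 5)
Your proposal is correct and follows essentially the same route as the paper: part (a) is obtained by invoking Farnsteiner's Theorem 3.8 together with the identification of self-extensions of the rank-one module over $u(\operatorname{rad}(\beta_{\chi}))$ with $\operatorname{H}^{1}(u(\operatorname{rad}(\beta_{\chi})),k)$, and part (b) follows by specializing to $\operatorname{rad}(\beta_{\chi})={\mathfrak g}_{r}$, using that the $p$-operation vanishes on ${\mathfrak g}_{r}\subseteq{\mathfrak n}^{+}$ to pass from restricted to ordinary cohomology and then applying the abelianization description of $\operatorname{H}^{1}$. The only cosmetic difference is that you justify the passage to trivial coefficients via the vanishing of $\beta_{\chi}$ on the radical, whereas the paper simply records the isomorphism $\operatorname{Ext}^{1}_{u(\operatorname{rad}(\beta_{\chi}))}(k_{\chi},k_{\chi})\cong\operatorname{H}^{1}(u(\operatorname{rad}(\beta_{\chi})),k)$ as a known fact.
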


\begin{proof} Part (a) follows directly from \cite[Theorem 3.8(2)] 
{Fa} and the
fact that $\operatorname{Ext}^{1}_{u(\operatorname{rad}(\beta_{\chi}))}
(k_{\chi},k_{\chi})\cong \text{H}^{1}(u(\operatorname{rad}(\beta_ 
{\chi})),k)$.
For part (b), we use the fact that $\text{rad}({\beta}_{\chi})= 
{\mathfrak g}_{r}$. Moreover,
since $({\mathfrak g}_{r})^{[p]}=0$ it follows that
$$\operatorname{H}^{1}(u(\operatorname{rad}(\beta_{\chi})),k)\cong
\operatorname{H}^{1}(\operatorname{rad}(\beta_{\chi}),k)\cong
({\mathfrak g}_{r}/[{\mathfrak g}_{r},{\mathfrak g}_{r}])^{*}.$$
\end{proof}

We next prove that one can find a lower bound for $\operatorname{Ext}^ 
{1}_{u({\mathfrak g},\chi)}(L,L)$
in terms of $\operatorname{Ext}^{1}_{u({\mathfrak b}^{+},\chi)}(S,S)$.

\begin{prop}[B]  Let $1<r(\chi)<p-1$. Then
$$\dim \operatorname{Ext}^{1}_{u({\mathfrak g},\chi)}(L,L)\geq \dim
\operatorname{Ext}^{1}_{u({\mathfrak b}^{+},\chi)}(S,S)-1.$$
\end{prop}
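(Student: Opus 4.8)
The plan is to realize $L$ as an induced module, transport the computation of $\Ext^{1}_{u(\fg,\chi)}(L,L)$ to $u(\fb^{+},\chi)$ by Frobenius reciprocity, and then exploit the structure of the restriction of $L$ to $\fb^{+}$. Since $\fg=ke_{-1}\oplus\fb^{+}$ and $e_{-1}^{[p]}=\delta_{-1,0}e_{-1}=0$, the relation $e_{-1}^{p}=\chi(e_{-1})^{p}\cdot 1$ holds in $u(\fg,\chi)$, so $u(\fg,\chi)$ is free of rank $p$ over $u(\fb^{+},\chi)$ with basis $\{e_{-1}^{i}:0\le i\le p-1\}$. Hence induction $\operatorname{Ind}=u(\fg,\chi)\otimes_{u(\fb^{+},\chi)}(-)$ is exact, and being left adjoint to the exact functor $\operatorname{Res}$ it preserves projectives. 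Applying $\operatorname{Ind}$ to a projective resolution of $S$ therefore gives a projective resolution of $L=\operatorname{Ind}S$, and the tensor--hom adjunction yields
\[
\Ext^{1}_{u(\fg,\chi)}(L,L)\cong\Ext^{1}_{u(\fb^{+},\chi)}(S,\operatorname{Res}L).
\]

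Next I would analyze $M:=\operatorname{Res}_{\fb^{+}}L$. The PBW basis gives a filtration $G_{0}\subset G_{1}\subset\cdots\subset G_{p-1}=M$ with $G_{j}=\sum_{i\le j}e_{-1}^{i}S$; the relations $e_{0}e_{-1}^{i}=e_{-1}^{i}(e_{0}-i)$ and $[e_{k},e_{-1}]=(-1-k)e_{k-1}\in\fb^{+}$ for $k\ge1$ show that each $G_{j}$ is a $\fb^{+}$-submodule. Since $S$ is the unique simple $u(\fb^{+},\chi)$-module, each quotient $G_{j}/G_{j-1}$ has dimension $\dim S$ with all composition factors $\cong S$, hence $G_{j}/G_{j-1}\cong S$; in particular every composition factor of $M$ is isomorphic to $S$. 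Moreover $\operatorname{soc}M$ is simple: by Frobenius reciprocity $\Hom_{\fb^{+}}(S,M)\cong\Hom_{\fg}(L,L)=k$, and as $S$ is the only simple module this forces $\operatorname{soc}M\cong S$, namely $\operatorname{soc}M=G_{0}$.

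With this in hand I would run the long exact sequence of $\Hom_{\fb^{+}}(S,-)$ applied to $0\to S\to M\to Q\to 0$, where $S=\operatorname{soc}M$ and $Q=M/S$. Because $\Hom_{\fb^{+}}(S,S)\cong\Hom_{\fb^{+}}(S,M)\cong k$ and the natural map between them is an isomorphism, the connecting map $\partial\colon\Hom_{\fb^{+}}(S,Q)\hookrightarrow\Ext^{1}_{\fb^{+}}(S,S)$ is injective, with cokernel embedding into $\Ext^{1}_{\fb^{+}}(S,M)$. Combining this with the isomorphism above gives
\[
\dim\Ext^{1}_{u(\fg,\chi)}(L,L)=\dim\Ext^{1}_{\fb^{+}}(S,M)\ge\dim\Ext^{1}_{u(\fb^{+},\chi)}(S,S)-\dim\Hom_{\fb^{+}}(S,Q).
\]
The proposition then reduces to showing $\dim\Hom_{\fb^{+}}(S,Q)\le1$, i.e.\ that the second socle layer of $\operatorname{Res}L$ is a single copy of $S$.

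This last point is the main obstacle. The natural route is to prove that $\operatorname{Res}_{\fb^{+}}L$ is uniserial with composition series $G_{0}\subset\cdots\subset G_{p-1}$, for then $\operatorname{soc}(M/S)=G_{1}/G_{0}\cong S$ and $\dim\Hom_{\fb^{+}}(S,Q)=1$, yielding the bound. To establish uniseriality I would show that every nonzero $\fb^{+}$-submodule $V$ equals some $G_{j}$: taking $j$ maximal with $V\subseteq G_{j}$, the leading $e_{-1}^{j}$-components of elements of $V$ form a nonzero $\fb^{+}$-submodule of $G_{j}/G_{j-1}\cong S$, hence all of it by simplicity, so $V+G_{j-1}=G_{j}$; the delicate step is then to force $V\supseteq G_{j-1}$, ruling out submodules transverse to the filtration. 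This containment cannot be read off the associated graded and requires the explicit $\fb^{+}$-action, so I expect it to be where the real work lies.
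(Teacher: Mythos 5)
Your reduction is sound, and it is in fact precisely the skeleton of the paper's own proof: Frobenius reciprocity (Shapiro's lemma) gives $\Ext^{1}_{u(\fg,\chi)}(L,L)\cong\Ext^{1}_{u(\fb^{+},\chi)}(S,L|_{\fb^{+}})$, and the long exact sequence attached to $0\to S\to L|_{\fb^{+}}\to Q\to 0$, together with $\Hom_{u(\fb^{+},\chi)}(S,S)\cong\Hom_{u(\fb^{+},\chi)}(S,L|_{\fb^{+}})\cong k$, yields
$$\dim\Ext^{1}_{u(\fb^{+},\chi)}(S,L|_{\fb^{+}})\ \geq\ \dim\Ext^{1}_{u(\fb^{+},\chi)}(S,S)-\dim\Hom_{u(\fb^{+},\chi)}(S,Q).$$
Your filtration $G_{0}\subset\cdots\subset G_{p-1}$ with layers isomorphic to $S$, and the identification $\operatorname{soc}(L|_{\fb^{+}})=G_{0}\cong S$ via $\Hom_{\fb^{+}}(S,L|_{\fb^{+}})\cong\Hom_{\fg}(L,L)\cong k$, are all correct.

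However, the proof is not complete: the proposition follows only if $\dim\Hom_{u(\fb^{+},\chi)}(S,Q)\leq 1$, i.e.\ if the second socle layer of $L|_{\fb^{+}}$ is a \emph{single} copy of $S$, and you explicitly leave this unproved (``where the real work lies''). That inequality is the entire mathematical content at this stage: since $S$ is the unique simple $u(\fb^{+},\chi)$-module, $\Hom(S,Q)\neq 0$ automatically, and if $\operatorname{soc}Q$ were, say, $S\oplus S$, your argument would only give the weaker bound $\dim\Ext^{1}(S,S)-2$. The paper closes exactly this gap by establishing, through a direct computation with the $\fb^{+}$-action, that $L|_{u(\fb^{+},\chi)}$ is uniserial, whence $Q$ is uniserial of length $p-1$ and $\Hom(S,Q)\cong k$. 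Your sketched uniseriality argument does not substitute for that computation: the leading-term step only shows that a submodule $V\subseteq G_{j}$ with $V\not\subseteq G_{j-1}$ satisfies $V+G_{j-1}=G_{j}$, which holds for \emph{any} filtration with simple layers (including non-uniserial modules such as $S\oplus S$), while the step that rules out submodules transverse to the filtration, namely $V\supseteq G_{j-1}$, is precisely what you defer. So the proposal correctly identifies the paper's strategy and correctly locates the point of difficulty, but the key lemma carrying the actual work is missing, and without it the stated bound does not follow.
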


\begin{proof} First observe by Frobenius reciprocity,
$$\text{Ext}^{1}_{u({\mathfrak g},\chi)}(L,L)\cong \text{Ext}^{1}_{u 
({\mathfrak b}^{+},\chi)}(S,L).$$
The module $L|_{u({\mathfrak b}^{+},\chi)}$ has $p$ composition  
factors isomorphic to $S$.
By a direct computation, one can prove that $L|_{u({\mathfrak b}^{+}, 
\chi)}$ is uniserial.
We have a short exact sequence of $u(\fb^{+},\chi)$-modules
$$0\rightarrow S \rightarrow L \rightarrow N \rightarrow 0$$
with $N$ uniserial of length $p-1$.
Since $\text{Hom}_{u({\mathfrak b}^{+},\chi)}(S,S)\cong \text{Hom}_{u 
({\mathfrak b}^{+},\chi)}(S,L)
\cong k$, the long exact sequence induced from the short exact  
sequence is
$$0\rightarrow \text{Hom}_{u({\mathfrak b}^{+},\chi)}(S,N)
\rightarrow \text{Ext}^{1}_{u({\mathfrak b}^{+},\chi)}(S,S)\rightarrow
\text{Ext}^{1}_{u({\mathfrak b}^{+},\chi)}(S,L)\rightarrow \dots$$
But, $\text{Hom}_{u({\mathfrak b}^{+},\chi)}(S,N)\cong k$ and from  
the sequence
$\text{Ext}^{1}_{u({\mathfrak b}^{+},\chi)}(S,S)/k$ embeds into
$\text{Ext}^{1}_{u({\mathfrak b}^{+},\chi)}(S,L)$.

\end{proof}

Suppose that $\chi\in ({\mathfrak b}^{+})^{*}$ and
$\chi(\operatorname{rad}(\beta_{\chi}))=0$ (recall that we may assume  
this when $r$ is
even). Then one can regard $\chi\in ({\mathfrak b}^{+}/\operatorname 
{rad}(\beta_{\chi}))^{*}$ and
form the reduced enveloping algebra $u({\mathfrak b}^{+}/\operatorname 
{rad}(\beta_{\chi}),\chi)$.
Under this assumption, we can provide an interpretation of $\text{Ext} 
^{n}_{u({\mathfrak g},\chi)}(L,L)$.

\begin{thm}[C] Suppose that $\operatorname{rad}(\beta_{\chi})$ is an  
ideal of ${\mathfrak b}^{+}$ and
$\chi(\operatorname{rad}(\beta_{\chi}))=0$. Then
\begin{itemize}
\item[(a)] $S^{\operatorname{rad}(\beta_{\chi})}=S$.
\item[(b)] The algebra $u({\mathfrak b}^{+}/\operatorname{rad}(\beta_ 
{\chi}),\chi)$ is semisimple.
\item[(c)] The module $S$ is a simple and projective $u({\mathfrak b}^ 
{+}/\operatorname{rad}(\beta_{\chi}),\chi)$-module.
\item[(d)] $\operatorname{Ext}^{n}_{u({\mathfrak g},\chi)}(L,L)\cong
\operatorname{Hom}_{u({\mathfrak b}^{+}/\operatorname{rad}(\beta_ 
{\chi}),\chi)}
(S,\operatorname{H}^{n}(u(\operatorname{rad}(\beta_{\chi})),L))$ for  
all $n$.
\end{itemize}
\end{thm}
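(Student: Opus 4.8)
The plan is to establish parts (a)--(c) as structural preliminaries and then use them to identify the Lyndon--Hochschild--Serre spectral sequence collapsing that yields (d). First I would prove (a): since $\operatorname{rad}(\beta_{\chi})$ is an ideal of $\fb^{+}$ on which $\chi$ vanishes, the restriction of $\chi$ to $\operatorname{rad}(\beta_{\chi})$ is trivial, and by definition of $\operatorname{rad}(\beta_{\chi})$ the bracket $[\operatorname{rad}(\beta_{\chi}),\fb^{+}]$ lies in the kernel of $\chi$ as well. The one-dimensional module $k_{\chi}$ from which $S$ is induced has $\operatorname{rad}(\beta_{\chi})$ acting by $\chi$, hence trivially; a direct computation with the $\fb^{+}$-action on $S=u(\fb^{+},\chi)\otimes_{u(\fg_{s},\chi)}k_{\chi}$ then shows $\operatorname{rad}(\beta_{\chi})$ acts trivially on all of $S$, giving $S^{\operatorname{rad}(\beta_{\chi})}=S$.

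For (b) and (c), I would exploit that modding out by $\operatorname{rad}(\beta_{\chi})$ kills the radical of the form $\beta_{\chi}$, so the induced bilinear form on $\fb^{+}/\operatorname{rad}(\beta_{\chi})$ is nondegenerate. A reduced enveloping algebra $u(\fa,\chi)$ of a restricted Lie algebra $\fa$ with a $\chi$ whose associated form is nondegenerate is semisimple (this is the standard criterion: nondegeneracy of $\beta_{\chi}$ forces the algebra to be a full matrix algebra, cf.\ the structure theory used in \cite{Ch,FN}). Once semisimplicity of $u(\fb^{+}/\operatorname{rad}(\beta_{\chi}),\chi)$ is in hand, every module is projective, and since $S$ factors through this quotient by part (a) and is simple as a $u(\fb^{+},\chi)$-module, it is simple and projective there, giving (c).

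The main content is (d). The strategy is Frobenius reciprocity followed by a spectral sequence. As in Proposition~\ref{P:Derivation} and the preceding proposition, Frobenius reciprocity gives $\operatorname{Ext}^{n}_{u(\fg,\chi)}(L,L)\cong \operatorname{Ext}^{n}_{u(\fb^{+},\chi)}(S,L)$. Now $\operatorname{rad}(\beta_{\chi})$ is a normal (restricted) subalgebra of $\fb^{+}$ with quotient $\fb^{+}/\operatorname{rad}(\beta_{\chi})$, so the Lyndon--Hochschild--Serre spectral sequence for the extension of reduced enveloping algebras reads
$$
E_{2}^{i,j}=\operatorname{Ext}^{i}_{u(\fb^{+}/\operatorname{rad}(\beta_{\chi}),\chi)}\!\bigl(S,\operatorname{H}^{j}(u(\operatorname{rad}(\beta_{\chi})),L)\bigr)\Rightarrow \operatorname{Ext}^{i+j}_{u(\fb^{+},\chi)}(S,L).
$$
Here I am using (a) to ensure $S$ descends to a module over the quotient algebra and that $\operatorname{H}^{j}(u(\operatorname{rad}(\beta_{\chi})),L)$ carries a well-defined action of that quotient (the $\chi$-twist is absorbed correctly because $\chi$ vanishes on $\operatorname{rad}(\beta_{\chi})$). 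By (b) the base algebra $u(\fb^{+}/\operatorname{rad}(\beta_{\chi}),\chi)$ is semisimple, so $\operatorname{Ext}^{i}$ vanishes for $i>0$ and the spectral sequence collapses to the edge $i=0$, yielding
$$
\operatorname{Ext}^{n}_{u(\fb^{+},\chi)}(S,L)\cong \operatorname{Hom}_{u(\fb^{+}/\operatorname{rad}(\beta_{\chi}),\chi)}\!\bigl(S,\operatorname{H}^{n}(u(\operatorname{rad}(\beta_{\chi})),L)\bigr),
$$
which combined with the Frobenius reciprocity isomorphism is exactly (d).

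The step I expect to be the main obstacle is setting up the spectral sequence correctly in the reduced (restricted) setting and tracking the $\chi$-twists: one must verify that $\operatorname{rad}(\beta_{\chi})$ is genuinely a restricted ideal, that the $u$-cohomology functor $\operatorname{H}^{j}(u(\operatorname{rad}(\beta_{\chi})),-)$ lands in modules for the quotient reduced enveloping algebra with the right character (which is where $\chi(\operatorname{rad}(\beta_{\chi}))=0$ is essential), and that the spectral sequence of the normal subalgebra for reduced enveloping algebras is available and converges as stated. The collapse itself is then immediate from semisimplicity, so the remaining verifications are bookkeeping rather than substantive difficulty.
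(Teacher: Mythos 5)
Your parts (a), (c), and (d) follow the same route as the paper: your (a) is a direct PBW-induction argument in place of the paper's appeal to Farnsteiner's isomorphism $S\cong u({\mathfrak g}_{s},\chi)\otimes_{u(\operatorname{rad}(\beta_{\chi}))}k$ (both work, though you should note that concluding the character of $k_{\chi}$ vanishes on $\operatorname{rad}(\beta_{\chi})$ uses not only $\chi(\operatorname{rad}(\beta_{\chi}))=0$ but also that the $p$-map vanishes there), and your (d) is exactly the paper's argument: Frobenius reciprocity, then the Lyndon--Hochschild--Serre spectral sequence for $\operatorname{rad}(\beta_{\chi})\subseteq {\mathfrak b}^{+}$, collapsing by semisimplicity of the quotient algebra. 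The problem is (b), which is where the real content of the theorem sits, since everything in (d) hinges on it.

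Your proof of (b) rests on the claim that nondegeneracy of the induced form on ${\mathfrak b}^{+}/\operatorname{rad}(\beta_{\chi})$ forces $u({\mathfrak b}^{+}/\operatorname{rad}(\beta_{\chi}),\chi)$ to be a matrix algebra, cited as a ``standard criterion'' with a pointer to [Ch, FN]. This is a genuine gap: no such general criterion appears in those references, and for an arbitrary restricted Lie algebra ${\mathfrak a}$ the implication ``${\mathfrak a}^{\chi}=0 \Rightarrow u({\mathfrak a},\chi)$ semisimple'' is essentially the Kac--Weisfeiler maximal-dimension/divisibility property in the index-zero case (every simple $u({\mathfrak a},\chi)$-module has dimension $p^{\dim {\mathfrak a}/2}$). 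That property is a deep theorem for Lie algebras of reductive groups (Premet) but is not established for general restricted Lie algebras; moreover ${\mathfrak b}^{+}/\operatorname{rad}(\beta_{\chi})$ is solvable but \emph{not} $p$-nilpotent (it contains the image of the torus $ke_{0}$), so it is not covered by the classical unipotent results either. The paper avoids any such appeal by a self-contained dimension count: Farnsteiner's isomorphism gives $\dim S = p^{\dim {\mathfrak g}_{s}-\dim \operatorname{rad}(\beta_{\chi})}$, while the definition $S=u({\mathfrak b}^{+},\chi)\otimes_{u({\mathfrak g}_{s},\chi)}k_{\chi}$ gives $\dim S = p^{\dim {\mathfrak b}^{+}-\dim {\mathfrak g}_{s}}$; equating the two exponents yields $\dim u({\mathfrak b}^{+}/\operatorname{rad}(\beta_{\chi}),\chi)=p^{\dim {\mathfrak b}^{+}-\dim \operatorname{rad}(\beta_{\chi})}=(\dim S)^{2}$, and since $S$ is a simple module over this algebra, density/Wedderburn theory (the paper cites [CR, (6.17)]) forces the algebra to be all of $\operatorname{End}_{k}(S)$, hence semisimple with $S$ projective. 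To repair your write-up, replace the appeal to the ``standard criterion'' by this count; note that it requires the dimension formula for $S$ coming from Farnsteiner's isomorphism, which your proposal never establishes.
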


\begin{proof} (a) We apply the constructions in the proof of \cite 
[Theorem 3.8]{Fa}. In the proof it
is shown that
$$S\cong u({\mathfrak g}_{s},\chi)\otimes_{u(\operatorname{rad}(\beta_ 
{\chi}))} k.$$
Since
$\operatorname{rad}(\beta_{\chi})$ is an ideal of ${\mathfrak b}^{+} 
$, it is also an ideal of
$\fg_{s}$. By using the isomorphism above, one sees that $S^ 
{\operatorname{rad}(\beta_{\chi})}=S$.

(b), (c) Since $S^{\operatorname{rad}(\beta_{\chi})}=S$, one can  
regard $S$ as a
$u({\mathfrak b}^{+}/\operatorname{rad}(\beta_{\chi}),\chi)$-module.  
It is simple as
$u({\mathfrak b}^{+}/\operatorname{rad}(\beta_{\chi}),\chi)$-module  
because it is
simple as $u({\mathfrak b}^{+},\chi)$-module.

Next observe that $\dim u({\mathfrak b}^{+}/\operatorname{rad}(\beta_ 
{\chi}),\chi)=p^{a}$
where $a=\dim {\mathfrak b}^{+}-\dim \operatorname{rad}(\beta_{\chi})$
Recall that $\dim S=p^{b}$ where $b=\dim {\mathfrak b}^{+}-\dim  
{\mathfrak g}_{s}$. The isomorphism
above shows that $\dim S=p^{z}$ where $z=\dim {\mathfrak g}_{s}-\dim  
\operatorname{rad}(\beta_{\chi})$.
Therefore,
$$\dim {\mathfrak b}^{+}-\dim {\mathfrak g}_{s}=\dim {\mathfrak g}_ 
{s}-\dim \operatorname{rad}(\beta_{\chi}).$$
In other words
$$\dim {\mathfrak b}^{+}-\dim \operatorname{rad}(\beta_{\chi})=2(\dim  
{\mathfrak g}_{s}-
\dim \operatorname{rad}(\beta_{\chi})).$$
This proves that
$$\dim u({\mathfrak b}^{+}/\operatorname{rad}(\beta_{\chi}),\chi)= 
(\dim S)^{2}.$$
Hence, the algebra $u({\mathfrak b}^{+}/\operatorname{rad}(\beta_ 
{\chi}),\chi)$ is semisimple and
$S$ is projective. In this case since there is only one simple module  
for the algebra
the dimension is equal to the dimension of the simple times the  
dimension of its projective cover
(cf.\ \cite[(6.17) Proposition]{CR}).

(d) Recall that $\operatorname{Ext}^{n}_{u({\mathfrak g},\chi)}(L,L) 
\cong
\operatorname{Ext}^{n}_{u({\mathfrak b}^{+},\chi)}(S,L)$ by Frobenius  
reciprocity.
We can now apply the Lyndon-Hochschild-Serre spectral sequence and  
use part (a):
$$E_{2}^{i,j}=\text{Ext}^{i}_{u({\mathfrak b}^{+}/\operatorname{rad} 
(\beta_{\chi}),\chi)}(S,
\text{H}^{j}(u(\operatorname{rad}(\beta_{\chi})),L))\Rightarrow \text 
{Ext}^{i+j}_{u({\mathfrak b}^{+},\chi)}(S,L).$$
 From part (b), the semisimplicity of $u({\mathfrak b}^{+}/ 
\operatorname{rad}(\beta_{\chi}),\chi)$ forces
the spectral sequence to collapse and yields the statement of (d).
\end{proof}

We end this section by completing the example ${\mathfrak g}=W(1,1)$  
when $p=5$. In this case $r=2$ or $3$.
In both cases the support variety of $L$ as a $u({\mathfrak g},\chi)$- 
module has dimension equal to 1
\cite[Theorem 4.5, Example 4.6]{FN}. Therefore, $u({\mathfrak g},\chi) 
$ is uniserial and
$\text{Ext}^{1}_{u({\mathfrak g},\chi)}(L,L)\cong k$. If $r=2$,
$$\text{Ext}^{1}_{u({\mathfrak b}^{+},\chi)}(S,S)\cong k\oplus k$$
by Theorem 4.3(A). So in this case the lower bound in Proposition 4.3 
(B) becomes an equality.
We conjecture that this will hold true for $r$ even and any $p$. On  
the other hand, if $r=3$ then
the support variety of $S$ over $u({\mathfrak b}^{+},\chi)$ is one- 
dimensional
(cf.\ \cite[Theorem 3.8]{Fa})and
$$\text{Ext}^{1}_{u({\mathfrak b}^{+},\chi)}(S,S)\cong k.$$
This shows that the inequality in Proposition 4.3(B) in the odd case  
can be strict.

\subsection{$r(\chi)=p-1$:} In the last case the $\text{Ext}^{1}$  
information can be
directly interpreted from \cite[Theorem 2.6]{FN}.

\begin{thm} Let ${\mathfrak g}=W(1,1)$ and $r(\chi)=p-1$.
\begin{itemize}
\item[(a)] If ${\mathfrak g}^{\chi}$ is a torus then $u({\mathfrak g}, 
\chi)$ is
semisimple. In this case $\operatorname{Ext}^{1}_{u({\mathfrak g}, 
\chi)}(L,L)=0$ for
all simple $u({\mathfrak g},\chi)$-modules $L$.
\item[(b)] If ${\mathfrak g}^{\chi}$ is $p$-nilpotent then $u 
({\mathfrak g},\chi)$
  has only one non-semisimple block. The non-semisimple
block has one simple module $L$ whose projective cover is uniserial  
with two
composition factors isomorphic to $L$. Thus, $\operatorname{Ext}^{1}_ 
{u({\mathfrak g},\chi)}(L,L)=k$.
\end{itemize}
\end{thm}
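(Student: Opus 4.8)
The plan is to deduce the entire statement from the structural description of $u(\fg,\chi)$ in the case $r(\chi)=p-1$ supplied by \cite[Theorem 2.6]{FN}, which splits according to the restricted structure of the centralizer $\fg^{\chi}$. First I would recall that $r(\chi)=p-1$ is exactly the condition $\chi(e_{p-2})\neq 0$, and that the cited theorem yields the dichotomy appearing in the statement: when $\fg^{\chi}$ is a torus the algebra $u(\fg,\chi)$ is semisimple, whereas when $\fg^{\chi}$ is $p$-nilpotent there is a single non-semisimple block, containing exactly one simple module $L$ whose projective cover $P(L)$ is uniserial of length $2$ with both composition factors isomorphic to $L$. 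With this input the two parts reduce to routine homological bookkeeping.

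For part (a), semisimplicity of $u(\fg,\chi)$ immediately forces $\Ext^{1}_{u(\fg,\chi)}(L,L)=0$ for every simple module, since all extension groups over a semisimple algebra vanish.

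For part (b), I would localize at the unique non-semisimple block (all other simples lie in semisimple blocks and admit no self-extensions) and compute $\Ext^{1}(L,L)$ from the radical layers of $P(L)$. From the projective cover
$$0\to \rad P(L)\to P(L)\to L\to 0$$
and the standard identity $\dim_{k}\Ext^{1}_{u(\fg,\chi)}(L,L)=[\,\rad P(L)/\rad^{2}P(L):L\,]$ (valid over the algebraically closed field $k$, where $\End(L)=k$), uniseriality of $P(L)$ of length $2$ gives $\rad P(L)\cong L$ and hence $\rad^{2}P(L)=\rad(L)=0$. Thus $\rad P(L)/\rad^{2}P(L)\cong L$, the multiplicity is $1$, and $\Ext^{1}_{u(\fg,\chi)}(L,L)\cong k$.

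Essentially all the content is carried by \cite[Theorem 2.6]{FN}, and the homological step above is formal. The only points needing care are that the block decomposition localizes the $\Ext^{1}$ computation to the single non-semisimple block, and that the quoted uniserial structure is consistent with the self-injectivity of $u(\fg,\chi)$ noted in Section~\ref{S:Preliminaries} (so that $P(L)$ is also injective, forcing $\soc P(L)\cong\hd P(L)\cong L$ in length $2$). Accordingly I expect no genuine obstacle beyond correctly invoking the hypotheses of the cited theorem.
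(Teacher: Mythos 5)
Your proposal is correct and follows the same route as the paper: the paper's entire proof consists of invoking \cite[Theorem 2.6]{FN} and reading off the $\Ext^{1}$ groups, which is precisely what you do. Your added bookkeeping (vanishing of $\Ext^{1}$ over a semisimple algebra, and the radical-layer computation giving $\Ext^{1}_{u(\fg,\chi)}(L,L)\cong k$ from the uniserial length-two projective cover) simply makes explicit the ``direct interpretation'' the paper leaves to the reader.
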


\let\section=\oldsection

\def\scr{\mathcal}\def\cprime{$'$} \def\germ{\mathfrak}

\end{document}